\def\url@leostyle{%
  \@ifundefined{selectfont}{\def\UrlFont{\sf}}{\def\UrlFont{\small\ttfamily}}}
\definecolor{dblue}{rgb}{0,0,0.7}
\newtheoremstyle{mythm}{11pt}{11pt}{\it\color{dblue}}{}{\bf\color{dblue}}{.}{ }{}
\theoremstyle{mythm}
\newtheorem{theorem}{Theorem}[section]
\newtheorem{lemma}[theorem]{Lemma}
\newtheorem{corollary}[theorem]{Corollary}
\newtheorem{proposition}[theorem]{Proposition}
\theoremstyle{definition}
\newtheoremstyle{myremark}{3pt}{3pt}{}{}{\bfseries}{.}{ }{}
\theoremstyle{myremark}
\newtheorem{remark}[theorem]{Remark}
\newtheorem{example}[theorem]{Example}
\font\russ=wncyr10  1
\def\sha{\hbox{\russ\char88}}
\newcommand{\eins}{\boldsymbol{1}}
\DeclareMathOperator{\Ext}{Ext}
\DeclareMathOperator{\Ind}{Ind}
\DeclareMathOperator{\Inf}{Inf}
\DeclareMathOperator{\Gal}{Gal}
\DeclareMathOperator{\Hom}{Hom}
\DeclareMathOperator{\res}{res}
\DeclareMathOperator{\Sel}{Sel}
\DeclareMathOperator{\rk}{rk}
\DeclareMathOperator{\srk}{srk}
\DeclareMathOperator{\cok}{cok}
\newcommand{\QQ}{\mathbb{Q}}
\newcommand{\ZZ}{\mathbb{Z}}
\newcommand{\Ir}{\mathrm{Ir}}
\newcounter{condone}
\newenvironment{conditions}{\begin{list}{(\alph{condone})}{\usecounter{condone}}}{\end{list}}
\begin{document}

\title[Structure of Selmer groups]{On the Galois structure of Selmer groups}

\author{David Burns, Daniel Macias Castillo and Christian Wuthrich}

\address{King's College London, Department of Mathematics, London WC2R 2LS,
U.K.}
\email{david.burns@kcl.ac.uk}

\address{Instituto de Ciencias Matem\'aticas (ICMAT), 28049 Madrid, Spain.}
\email{daniel.macias@icmat.es}

\address{School of Math. Sciences,
University of Nottingham,
Nottingham NG7 2RD,
U.K.}
\email{christian.wuthrich@nottingham.ac.uk}

\begin{abstract} Let $A$ be an abelian variety defined over a number field $k$ and $F$ a finite Galois extension of $k$. Let $p$ be a prime number. Then under certain not-too-stringent
conditions on $A$ and $F$ we investigate the explicit Galois structure of the $p$-primary Selmer group of $A$ over $F$. We also use the results so obtained to derive new
bounds on the growth of the Selmer rank of $A$ over extensions of $k$.
\end{abstract}

\maketitle

\section{Introduction}\label{Intro}

Let $A$ be an abelian variety defined over a number field $k$ and write $A^t$ for the corresponding dual abelian variety.

Fix a prime number $p$. For each finite extension field $F$ of $k$ we write $\Sel_p(A/F)$ for the $p$-primary Selmer group  of $A$ over $F$.
At the outset we recall that this group can be defined as the direct limit, over non-negative integers $n$, of the Selmer groups associated to the isogenies $[p^n]$ of $A$ over $F$. It is then equal to the subgroup of the Galois cohomology group $H^1\bigl(F,A[p^\infty]\bigr)$ given by the usual local conditions, where $A[p^\infty]$ denotes
the Galois module of $p$-power torsion points on $A$.

Write $X(A/F)$ for the Pontryagin dual of  $\Sel_p(A/F)$. Denote by $T(A/F)$ the torsion subgroup of $X(A/F)$ and by $\overline{X}(A/F)$ the quotient of $X(A/F)$ by $T(A/F)$. We recall that  $X(A/F)$ contains a subgroup canonically isomorphic to the $p$-primary Tate-Shafarevich group $\sha(A^t/F)_p$ of $A^t$ over $F$, with the associated quotient group canonically isomorphic to $\Hom_\ZZ(A(F),\ZZ_p)$.
In particular, if $\sha(A/F)_p$ is finite, then $T(A/F)$ and
$\overline{X}(A/F)$ simply identify with $\sha(A^t/F)_p$ and $\Hom_\ZZ\bigl(A(F),\ZZ_p\bigr)$ respectively.

Let now $F/k$ be a Galois extension of group $G$. In this case we wish to study the structure of $X(A/F)$ as a $G$-module. We recall that describing the explicit Krull-Schmidt decomposition of $\ZZ_p[G]$-lattices that occur naturally in arithmetic is known to be
a very difficult problem (see, for example, the considerable difficulties already encountered by Rzedowski-Calder\'on et al in~\cite{RVM} when considering the pro-$p$ completion of the ring of algebraic
integers of $F$).

Notwithstanding this fact, in this article we will show that under certain not-too-stringent conditions on $A$ and $F$, there is a very strong interplay
between the structures of the $G$-modules $T(A/F)$ and $\overline{X}(A/{F})$ and that this interplay can in turn lead to concrete results about the explicit
structure of $\overline{X}(A/F)$ as a $\ZZ_p[G]$-module. For example, one of our results characterises the projectivity of $\overline{X}(A/{F})$ as a
$\ZZ_p[G]$-module (see Theorem \ref{noether} and Remark~\ref{noether rem}), whilst another characterises, for certain $G$, the conditions under which
$\overline{X}(A/{F})$ is a
 trivial source module over $\ZZ_p[G]$ (see Theorem~\ref{yakovlev}). More generally, our methods can be used in some cases to give an explicit decomposition of
 $\overline{X}(A/F)$ as a direct sum of indecomposable $\ZZ_p[G]$-modules (see the examples in \S\ref{examples}).

Whilst such explicit structure results might perhaps be of some intrinsic interest there are also two ways in which they can have important consequences.

Firstly, they give additional information about the change of rank of $X(A/L)$ as $L$ varies over the intermediate fields of $F/k$. As a particular example of this, we show here that this information can in some cases be used to tighten the bounds on the growth of ranks of Selmer groups in dihedral towers of numbers fields that were proved by Mazur and Rubin in \cite{mr2} (see Corollary \ref{mr cor}). In a similar way we also show that it sheds new light on Selmer ranks in false Tate curve towers of number fields (see Corollary \ref{ftct cor}).

Secondly, such structure results play an essential role in attempts to understand and investigate certain equivariant refinements of the Birch and Swinnerton-Dyer conjecture. In this regard, we note that in \cite{mw} the structure results proved here play a key role in our obtaining the first (either theoretical or numerical) verifications of the $p$-part of the equivariant Tamagawa number conjecture for elliptic curves $A$ in the technically most demanding case in which $A$ has strictly positive rank over $F$ and the Galois group $G$ is both non-abelian and of exponent divisible by (an arbitrarily large power of) $p$.

Finally we note that some of our structure results concerning projectivity are similar in spirit to the results on Selmer groups of elliptic curves over cyclotomic $\ZZ_p$-extensions that are proved by Greenberg in~\cite{RG2} (for more details in this regard see Remark ~\ref{noether rem}).

\section{Statements of the results}\label{mwts}

In this section we state all of the main results that are to be proved in \S\ref{Proof of the main structure results}.

\subsection{Basic notations}
For any finite group $\Gamma$ we write $\Ir(\Gamma)$ for the set of irreducible $\QQ_p^c$-valued characters of $\Gamma$. For each $\psi$ in $\Ir(\Gamma)$ we write
$$
 e_\psi = \frac{\psi(1)}{|\Gamma|}\, \sum_{\gamma \in \Gamma}\psi(\gamma^{-1})\,\gamma
$$
for the primitive idempotent of the centre $\zeta\bigl(\QQ_p^c[\Gamma]\bigr)$ of the group ring $\QQ_p^c[\Gamma]$. We fix a $\QQ_p^c[\Gamma]$-module $V_\psi$ of character $\psi$. We also write $\eins_\Gamma$ for the trivial character of $\Gamma$.

We also note that by a `$\ZZ_p[\Gamma]$-lattice' we shall mean a $\ZZ_p[\Gamma]$-module that is both finitely generated and free over $\ZZ_p$.

For any abelian group $M$ we write $M_{\rm tor}$ for its torsion subgroup. For any prime $p$ we write  $M[p]$ for the subgroup $\{m \in M: pm =0\}$ of $M_{\rm tor}$ and set $M_p := \ZZ_p\otimes_\ZZ M$. For any finitely generated $\ZZ_p$-module $M$ and any field extension $F$ of $\QQ_p$ we set $F\otimes M := F\otimes_{\ZZ_p}M$ and write $\rk_{\ZZ_p}(M)$ for the $\ZZ_p$-rank $\dim_{\QQ_p}(\QQ_p\otimes M)$ of $M$.

For any Galois extension of fields $L/K$ we write $G_{L/K}$ in place of $\Gal(L/K)$. For each non-archimedean place $v$ of a number field we write $\kappa_v$ for its residue field. We also write $\mathbb{F}_p$ for the field with $p$ elements.

\subsection{The hypotheses}\label{Preliminaries} We assume throughout to be given a finite Galois extension of number fields $F/k$ of group $G$ and an abelian variety $A$ defined over $k$. We also fix a prime number $p$ and a $p$-Sylow subgroup $P$ of $G$ and then set $K := F^P$.

In this section we assume in addition that $A$ and $F$ satisfy the following hypotheses.
\begin{conditions}
\item\label{hyp_a}   $A(K)[p] = 0$;
\item\label{hyp_b}  No Tamagawa number of $A_{/K}$ is divisible by $p$;
\item\label{hyp_c} $A_{/K}$ has good reduction at all $p$-adic places;
\item\label{hyp_d}  For all $p$-adic places $v$ that ramify in $F/K$, the reduction is ordinary and $A(\kappa_v)[p]= 0$;
\item\label{hyp_e}   No place of bad reduction for $A_{/k}$ is ramified in $F/k$.
\end{conditions}

\begin{remark}
 These hypotheses are for the most part motivated by the arguments that are used by Greenberg in~\cite{RG} (see, in particular, the proof of Proposition~\ref{sha tate} below). For a fixed abelian variety $A$ over $k$ the first three hypotheses are clearly satisfied by all but finitely many primes $p$. However, the hypothesis~\ref{hyp_d} excludes the case that is called `anomalous' by Mazur in~\cite{m} and, for a given $A$, there may be infinitely many primes $p$ for which there are $p$-adic places $v$ at which $A$ has good ordinary reduction but $A(\kappa_v)[p]$ does not vanish.
Nevertheless, it is straightforward to describe examples of abelian varieties $A$ for which there are only finitely many such anomalous places -- see, for example, the result of Mazur and Rubin in~\cite[Lemma A.5]{mr}.
\end{remark}

\subsection{The Galois structure of \texorpdfstring{$X(A/F)$}{X(A/F)}}\label{Statement of the main structure results}
 We first recall from the introduction that $X(A/F)$ is defined as the Pontryagin dual of $\Sel_p(A/F)$ and that,
 if the group $\sha(A/F)_p$ is finite, then the tautological short exact sequence
 \[0\to T(A/F)\to X(A/F)\to \overline{X}(A/F)\to 0\] simply identifies with the canonical short exact sequence
 \[0\to \sha(A^t/F)_p\to X(A/F)\to \Hom_\ZZ(A(F),\ZZ_p)\to 0.\]
 
 For each $\rho$ in $\Ir(P)$ we write $m_\rho$, or $m_{F,\rho}$ when we wish to be more precise,
 for the multiplicity with which $\rho$ occurs in the representation $\QQ_p^c\otimes X(A/F)$. For each subfield $L$ of $F$ we set
\[ \srk(A/L) := \dim_{\QQ_p}(\QQ_p\otimes X(A/L)).\]
For any fields $L$ and $L'$ with $k \subseteq L \subseteq L' \subseteq F$ we also set
\[
T^{L'}\!\!(A/L) := \cok\Bigl(T(A/L') \xrightarrow{\pi^{L'}_L} T(A/L)\Bigr),
\]
where $\pi^{L'}_L$ denotes the natural norm map.

\subsubsection{Projectivity results} Our first results give an explicit characterisation of the conditions under which $\overline{X}(A/F)$ can be a projective
$\ZZ_p[G]$-module.

\begin{theorem}\label{noether}
  If $A$ and $F$ satisfy hypotheses~\ref{hyp_a}--\ref{hyp_e}, then the following conditions are equivalent.
  \begin{enumerate}
  \item[(i)]\label{noether_i}   $\overline{X}(A/F)$ is a projective $\ZZ_p[G]$-module.
  \item[(ii)]\label{noether_ii}  $T^F\!(A/F^H) =  0 $ for $H\leq P$ and $X(A/F)$ spans a free $\QQ_p[P]$-module.
  \item[(iii)]\label{noether_iii} $T^F\!(A/F^H) =  0 $ for $H\leq P$ and $m_{\rho} \ge \rho(1)\cdot\srk(A/K)$ for $\rho\in \Ir(P)$.
  \item[(iv)]\label{noether_iv}  For each $H\leq P$ both $T^F\!(A/F^H) =  0 $ and $\srk(A/F) = |H|\cdot \srk(A/F^H)$.
  \item[(v)]\label{noether_v}   For each $H\leq P$ with $|H|\! =\! p$ both $T^F(A/F^H)\! =\!0 $ and $\srk(A/F) = p\cdot \srk(A/F^H)$.
  \end{enumerate}
\end{theorem}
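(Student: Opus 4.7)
The plan is to reduce to the case $G=P$, since $[G:P]$ is a unit in $\ZZ_p$ and hence projectivity of a $\ZZ_p[G]$-lattice is equivalent to projectivity of its restriction to $P$. Over the local ring $\ZZ_p[P]$, a $\ZZ_p$-free module is projective precisely when it is free, and by the classical result that cohomological triviality over a $p$-group is detected on cyclic subgroups of order $p$, it suffices to verify freeness as a $\ZZ_p[H]$-module for each $H\le P$ with $|H|=p$. The crucial arithmetic input I would draw on is the control theorem provided by the hypotheses \ref{hyp_a}--\ref{hyp_e} (presumably Proposition~\ref{sha tate}), which yields an isomorphism $X(A/F)_H\cong X(A/F^H)$ for every $H\le P$.

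The key algebraic step is to apply the functor of $H$-coinvariants to the tautological short exact sequence
\[0 \to T(A/F) \to X(A/F) \to \overline{X}(A/F) \to 0\]
and combine the resulting four-term exact sequence with the control isomorphism above. A brief diagram chase then identifies the $\ZZ_p$-torsion subgroup of $\overline{X}(A/F)_H$ with $T^F\!(A/F^H)$. Hence the arithmetic hypothesis $T^F\!(A/F^H)=0$ is equivalent to the algebraic statement that $\overline{X}(A/F)_H$ is $\ZZ_p$-free.

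I would prove the equivalences along the cycle (i)$\Rightarrow$(iv)$\Rightarrow$(v)$\Rightarrow$(i), with a parallel chain (i)$\Rightarrow$(ii)$\Rightarrow$(iii)$\Rightarrow$(v) through the character-theoretic conditions. The first two steps of the main cycle are formal: if $\overline{X}(A/F)\cong\ZZ_p[P]^n$ then $\overline{X}(A/F)_H\cong\ZZ_p^{n[P:H]}$, which gives (iv), and (v) is a special case. For the implication (v)$\Rightarrow$(i), fix $H\le P$ with $|H|=p$; the Diederichsen--Reiner classification of $\ZZ_p[H]$-lattices provides a decomposition $\overline{X}(A/F)|_H\cong\ZZ_p^{a}\oplus\ZZ_p[\zeta_p]^{b}\oplus\ZZ_p[H]^{c}$. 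The coinvariants-torsion identification forces $b=0$, and the rank identity $\srk(A/F)=p\,\srk(A/F^H)$ combined with $\srk(A/F)=a+cp$ and $\srk(A/F^H)=a+c$ forces $a=0$, yielding freeness over $\ZZ_p[H]$. The character-theoretic chain is similar: freeness of $\overline{X}(A/F)$ over $\ZZ_p[P]$ of rank $n=\srk(A/K)$ makes $\QQ_p\otimes X(A/F)$ a free $\QQ_p[P]$-module (giving (ii)) with $m_\rho=\rho(1)\cdot n$ (giving (iii)); for the closure (iii)$\Rightarrow$(v), I would use the inequality $m_\rho\ge\rho(1)\srk(A/K)$ combined with the torsion-freeness of $\overline{X}(A/F)_H$ coming from the $T^F$-vanishing to pin down the $\ZZ_p[H]$-decomposition and recover the rank identity.

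The main obstacle I anticipate is the implication (v)$\Rightarrow$(i): the interplay of the classification of $\ZZ_p[H]$-lattices with the arithmetic rank identity is where the real content lies. Equally crucial is the identification $\overline{X}(A/F)_H^{\rm tor}\cong T^F\!(A/F^H)$, which is the one step that depends essentially on the arithmetic control theorem rather than on formal algebra.
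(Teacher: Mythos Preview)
Your main cycle (i)$\Rightarrow$(iv)$\Rightarrow$(v)$\Rightarrow$(i) is correct and is essentially the paper's argument. In particular, your (v)$\Rightarrow$(i) via the Diederichsen--Reiner classification over each order-$p$ subgroup is exactly what the paper does (the paper phrases the elimination of the $\ZZ_p[\zeta_p]$-summand as an appeal to Theorem~\ref{yakovlev}(ii), but for a group of order $p$ this is just the explicit classification). Your identification $\overline{X}(A/F)_H^{\rm tor}\cong T^F(A/F^H)$ is precisely the content of Proposition~\ref{sha tate}, and the reduction of $\ZZ_p[G]$-projectivity to cohomological triviality over $P$, and then to freeness over each order-$p$ subgroup, matches the paper's use of \cite[Chapter VI, (8.7)--(8.10)]{brown} and the Hochschild--Serre spectral sequence.

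The gap is in your closure step (iii)$\Rightarrow$(v). You say you would ``use the inequality $m_\rho\ge\rho(1)\,\srk(A/K)$ combined with the torsion-freeness of $\overline{X}(A/F)_H$ \dots\ to pin down the $\ZZ_p[H]$-decomposition and recover the rank identity,'' but this does not work as stated. Once $T^F$-vanishing gives $b=0$, the lower bounds $m_\rho\ge\rho(1)\,n$ only yield $c\ge n[P:H]$ and $a+c\ge n[P:H]$ after restriction to $H$; they do not force $a=0$. What is actually needed is the \emph{reverse} inequality $m_\rho\le\rho(1)\,\srk(A/K)$ under the $T^F$-vanishing hypothesis, and this is the content of Theorem~\ref{upper bound}(ii)--(iii), proved separately by an induction on $|P|$ using the monomial structure of $p$-groups. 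The paper invokes exactly this result to obtain (ii)$\Leftrightarrow$(iii) at the outset; once that equivalence is in hand, (ii)$\Rightarrow$(iv) is immediate because a free $\QQ_p[P]$-module restricts to a free $\QQ_p[H]$-module. So your plan is sound apart from this one step, but you should recognise that (iii)$\Rightarrow$(ii) is not formal and requires the upper-bound theorem as a genuine input.
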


\begin{remark}
 The equivalence of the conditions (i) and (v) in Theorem \ref{noether} is somewhat analogous to a classical theorem of Noether. To see this write $\mathcal{O}_{L}$ for the ring of integers of a number field $L$ and $\kappa_{L,p}$ for the direct sum of its residue fields at all $p$-adic places. Then for $H\leq P$ with $|H| = p$ one has
$\rk(\mathcal{O}_F) = p\cdot \rk(\mathcal{O}_{F^H})$ (by Galois theory) and Noether's Theorem \cite{no} asserts that $\mathcal{O}_{F,p}$ is a projective $\ZZ_p[G]$-module if and only if for each such $H$ the cokernel of the trace map $\kappa_{F,p} \to \kappa_{F^H,p}$ vanishes.
\end{remark}

\begin{remark}\label{noether rem}
 Some aspects of Theorem~\ref{noether} are also reminiscent of, but differ in some important respects from, results that are proved by Greenberg in~\cite{RG2}. To be more precise, let $F_\infty$ and $k_\infty$ be the cyclotomic $\ZZ_p$-extensions of $F$ and $k$, set $\Delta := G_{F_\infty/k_\infty}$,
 let $A$ be an elliptic curve over $k$ that has good ordinary reduction at all $p$-adic places and for any extension $L$ of $k$ in $F_\infty$ set
 $X(A/L) := \Hom_{\ZZ_p}\bigl(\Sel_p(A/L),\QQ_p/\ZZ_p\bigr)$. Then, assuming crucially that $X(A/F_\infty)$ is both torsion over the relevant Iwasawa algebra and has
 zero $\mu$-invariant, Greenberg investigates the explicit structure of $X(A/F_\infty)$ as a $\ZZ_p[\Delta]$-module.
In particular, by combining the same characterisation of projectivity in terms of cohomological triviality (that we use in~\S\ref{proof of noether} to prove
Theorem~\ref{noether}) together with certain intricate computations in Galois cohomology, he proves under certain mild additional conditions on $A$ and $F$ that
an `imprimitive' form of $X(A/F_\infty)$ is projective over $\ZZ_p[\Delta]$ and then deduces families of explicit relations between the multiplicities with
which each $\psi \in \Ir(\Delta)$ occurs in $\QQ_p^c\otimes X(A/F_\infty)$.
However, even if the natural descent homomorphism $\QQ_p\otimes X(A/F_\infty)_{G_{F_\infty/F}} \to \QQ_p\otimes X(A/F)$ is bijective, in order to deduce results about
the structure of the $\QQ_p[G]$-module $\QQ_p\otimes X(A/F)$ one would need to describe $\QQ_p\otimes X(A/F_\infty)_{G_{F_\infty/F}}$ as an explicit quotient
of the $\QQ_p[\Delta]$-module $\QQ_p\otimes X(A/F_\infty)$ and, in general, this appears to be difficult.
\end{remark}

Regarding Theorem~\ref{noether}(v) we note that if $P$ is either cyclic or generalised quaternion (so $p=2$), then it has a unique subgroup of order $p$.

In another direction, if we assume the vanishing of $\sha(A/K)_p$, then we are led to the following simplification of Theorem~\ref{noether}. In the subsequent article \cite{mw} we use this particular characterisation in order to obtain important consequences in the context of equivariant refinements of the Birch and Swinnerton-Dyer conjecture.

\begin{corollary}\label{noether cor}
If $A$ and $F$ satisfy the hypotheses~\ref{hyp_a}--\ref{hyp_e} and $\sha(A/K)_p$ vanishes, then the following conditions are equivalent.
  \begin{enumerate}
  \item[(i)]\label{noether cor_i}   $X(A/F)$ is a projective $\ZZ_p[G]$-module.
  \item[(ii)]\label{noether cor_ii}  $\QQ_p\otimes X(A/F)$ is a free $\QQ_p[P]$-module.
  \item[(iii)]\label{noether cor_iii} $m_{\rho} \ge \rho(1)\cdot\srk(A/K)$ for $\rho\in \Ir(P)$.
  \item[(iv)]\label{noether cor_iv}  For each $H\le P$ both $T(A/F^H) =  0 $ and $\srk(A/F) = |H|\cdot \srk(A/F^H)$.
  \item[(v)]\label{noether cor_v}   For each $H\leq P$ with $|H|\! =\! p$ both $T(A/F^H)\! =\! 0 $ and $\srk(A/F) = p\cdot \srk(A/F^H)$.
  \end{enumerate}
\end{corollary}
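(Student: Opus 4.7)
My plan is to deduce Corollary~\ref{noether cor} directly from Theorem~\ref{noether}, by using the extra hypothesis $\sha(A/K)_p = 0$ to force the vanishing of $T(A/F)$ under each of the five listed conditions. The key elementary bridge is that a $\ZZ_p[G]$-module $X$ sitting in a short exact sequence $0 \to T \to X \to \overline{X} \to 0$, with $T$ its $\ZZ_p$-torsion submodule, is projective over $\ZZ_p[G]$ if and only if $T = 0$ and $\overline{X}$ is projective; indeed, projectivity forces $\ZZ_p$-freeness, and the converse is immediate. Applied to $X = X(A/F)$ this identifies condition (i) of the corollary with condition (i) of Theorem~\ref{noether} together with the additional vanishing $T(A/F) = 0$. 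Moreover, once $T(A/F) = 0$ is established one has $T^F(A/F^H) = T(A/F^H)$ for all $H \leq P$, and so the conditions (ii)--(v) of the corollary match those of Theorem~\ref{noether}.

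A first preliminary step is to observe that $\sha(A/K)_p = 0$ implies $T(A/K) = 0$: since this group is trivially finite, Cassels-Tate duality yields $\sha(A^t/K)_p = 0$, and the tautological identification $T(A/K) \cong \sha(A^t/K)_p$, valid whenever $\sha(A/K)_p$ is finite, supplies the conclusion.

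The main substantive step is to verify that each of the conditions (i)--(v) of the corollary, combined with $T(A/K) = 0$, implies $T(A/F) = 0$. This is immediate for (i) from the bridge above, and for (iv) by specialising to $H = \{1\}$. For (v) I would propagate $T(A/K) = 0$ down a subnormal chain in $P$ with successive quotients of order $p$, using at each step the hypothesised vanishing $T(A/F^H) = 0$ for a suitable order-$p$ subgroup $H$. For (ii) and (iii), I would first use the freeness of $\QQ_p \otimes X(A/F)$, respectively the multiplicity lower bounds, to force the rank equality $\srk(A/F) = |P| \cdot \srk(A/K)$; combining this with $T(A/K) = 0$ and a norm-map analysis of the sort already employed in the proof of Theorem~\ref{noether} should then yield $T(A/F) = 0$.

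The principal obstacle I foresee is the propagation step in the argument (v)~$\Rightarrow T(A/F) = 0$: descending vanishing of $T$ through cyclic $p$-extensions requires genuine control-theoretic input, and this is where hypotheses~\ref{hyp_a}--\ref{hyp_e}, in particular hypothesis~\ref{hyp_d} on ordinary reduction at ramified $p$-adic places, should enter through Greenberg-style arguments of the kind alluded to in Remark~\ref{noether rem}.
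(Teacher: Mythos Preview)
Your overall strategy --- reduce to Theorem~\ref{noether} by forcing $T(A/F)=0$ --- is the paper's strategy too, and your handling of (i), (iv), (v) is essentially correct. (For (v) there is in fact a simpler route than your propagation along a chain: since $T(A/F^H)=0$ trivially implies $T^F(A/F^H)=0$, condition (v) of the Corollary directly gives condition (v) of Theorem~\ref{noether}, hence $\overline{X}(A/F)$ is projective; then the splitting/Nakayama argument you already sketch under (i) yields $T(A/F)=0$.)

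The genuine gap is in the implication (ii)~$\Rightarrow$~(anything). Your matching claim fails here: once $T(A/F)=0$ one has $T^F(A/F^H)=T(A/F^H)$, but Theorem~\ref{noether}(ii) requires $T^F(A/F^H)=0$ for \emph{every} $H\le P$, which is strictly stronger than condition (ii) of the Corollary. So establishing only $T(A/F)=0$ does not let you invoke Theorem~\ref{noether}(ii)$\Rightarrow$(i). Worse, your proposed route to $T(A/F)=0$ from (ii) --- rank equality plus ``norm-map analysis'' --- does not go through as stated: the isomorphism $X(A/F)_P\cong X(A/K)$ of Proposition~\ref{sha tate} shows only that the \emph{image} of $T(A/F)_P$ in $X(A/F)_P$ vanishes, not that $T(A/F)_P$ itself does, because of the contribution from $H_1\bigl(P,\overline{X}(A/F)\bigr)$. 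That obstruction disappears exactly when $\overline{X}(A/F)$ is cohomologically trivial over $P$, which is what one is trying to prove.

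What is missing is an induction on $|P|$ together with a Herbrand-quotient step. For an order-$p$ subgroup $C$, Lemma~\ref{sha tate2} lets one apply the inductive hypothesis along a subnormal chain from $C$ to $P$ to obtain $T(A/F^C)=0$; Proposition~\ref{sha tate} then gives $\hat H^{-1}\bigl(C,\overline{X}(A/F)\bigr)=0$, and freeness of $\QQ_p\otimes X(A/F)$ over $\QQ_p[C]$ plus the Herbrand quotient forces $\hat H^{0}\bigl(C,\overline{X}(A/F)\bigr)=0$ as well. Hence $\overline{X}(A/F)$ is projective over $\ZZ_p[C]$, the tautological sequence splits over $\ZZ_p[C]$, and now $T(A/F^C)=0$ with Nakayama over $C$ gives $T(A/F)=0$. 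This is the substance of the paper's proof of (ii)~$\Rightarrow$~(v), and it is precisely the ``control-theoretic input'' you anticipate in your final paragraph --- but it enters through the Herbrand quotient and the induction, not merely through a direct norm computation.
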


Since obtaining an explicit description (in terms of generators and relations) of any given $\ZZ_p[G]$-module is in general very difficult to achieve, the simplicity of the equivalence of the conditions (i) and (ii) in Corollary~\ref{noether cor} seems  striking. In addition, the result of Theorem~\ref{upper bound} below will show that the value of $\srk(A/F)$ that occurs in Theorem~\ref{noether}(iv) and Corollary~\ref{noether cor}(iv) is actually the maximal possible in this context.

\subsubsection{General results} In the next result we provide evidence that there is a strong link between the Galois structures of Mordell-Weil and Tate-Shafarevich
groups under much more general hypotheses than occur in Theorem~\ref{noether} and Corollary~\ref{noether cor}.

For any subgroup $H$ of $G$ we write $\ZZ_p[G/H]$ for $\ZZ_p[G]\otimes_{\ZZ_p[H]}\ZZ_p$, regarded as a left $\ZZ_p[G]$-module in the obvious way.
Given a subgroup $J$ of $G$, by a `trivial source
$\ZZ_p[G]$-module with vertices contained in $J$' we shall then mean a direct sum of finitely many (left) $\ZZ_p[G]$-modules, each of which is isomorphic to a direct
summand of $\ZZ_p[G/H]$ for some
subgroup $H$ of $J$. In particular, by a `trivial source $\ZZ_p[G]$-module' we shall mean a trivial source $\ZZ_p[G]$-module with vertices contained in $G$.
We also note that if $G$ contains a normal subgroup $I$ of $p$-power index, then by a result of Berman and Dress (cf.~\cite[Th. 32.14]{curtisr}) one knows that
$\ZZ_p[G/H]$ is an indecomposable $\ZZ_p[G]$-module for any subgroup $H$ of $G$ that contains $I$. This fact will be useful in the sequel.

\begin{remark}\label{inconsistency} We wish to point out that the terminology introduced above differs from the one used in \cite{omac}, where trivial
source modules are instead called `permutation modules' (see \S 2.2 in loc. cit.). In particular, under our current terminology and in the context of
dihedral extensions of $\QQ$, the relevant assertion of \cite[Corollary 5.3 (ii)]{omac} would state that the modules $A^0(T_{F})$ and ${\rm Sel}^0(T_{F})_{\rm tf}$
defined in loc. cit. are trivial source $\ZZ_p[G]$-modules. 
\end{remark}

\begin{theorem}\label{yakovlev}
 Assume that $A$ and $F$ satisfy~\ref{hyp_a}--\ref{hyp_e} and that $P$ is cyclic of order $p^n$. For each integer $i$ with $0 \le i\le n$, let $F_i$ denote the unique
 field with $K \subseteq F_i \subseteq F$ and $[F:F_i]=p^{n-i}$. Then the following claims are valid.

\begin{enumerate}
\item[(i)]\label{yakovlev_i} The $\ZZ_p[P]$-module $\overline{X}(A/F)$ is determined up to isomorphism by the integers $\srk(A/F_i)$ for each $i$ with $0\le i \le n$
together with the diagram of torsion groups
\begin{equation}\label{yak diag}
T^F\!(A/F_{0}) \ \leftrightarrows\ T^F\!(A/F_{1})\ \leftrightarrows \ \cdots \ \leftrightarrows\  T^F\!(A/F_{n-1})
\end{equation}
where the upper and lower arrows are induced by the natural corestriction and restriction maps (for more details see~\S\ref{proof yakovlev} below).

\item[(ii)]\label{yakovlev_ii} Moreover, if $T^F\!(A/{F_{i}})$ vanishes for each $i$ with $0 \le i < n$, then $\overline{X}(A/F)$ is a
trivial source $\ZZ_p[G]$-module with vertices contained in $P$. In particular, in this case $\overline{X}(A/F)$ is a
trivial source $\ZZ_p[G]$-module, and hence also a trivial source $\ZZ_p[P]$-module.
\end{enumerate}
\end{theorem}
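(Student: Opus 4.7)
The approach is to invoke Yakovlev's classification theorem for $\ZZ_p[P]$-lattices when $P$ is cyclic of $p$-power order: such a lattice $M$ is determined up to isomorphism by the diagram of Tate cohomology groups $\hat{H}^0(H, M)$, as $H$ ranges over subgroups of $P$, equipped with the natural corestriction and restriction morphisms. Since $\overline{X}(A/F)$ is a $\ZZ_p[P]$-lattice by construction, Yakovlev's theorem applies, and the problem reduces to computing this diagram in terms of the data appearing in the statement of the theorem.

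For each subgroup $H\leq P$, writing $F_i = F^H$ so that $[F:F_i]=|H|$, I would apply $H$-cohomology to the tautological exact sequence
\[0\to T(A/F)\to X(A/F)\to \overline{X}(A/F)\to 0.\]
The hypotheses~\ref{hyp_a}--\ref{hyp_e}, combined with the descent/control results established earlier in the paper (notably Proposition~\ref{sha tate}), should provide canonical identifications $X(A/F)^H\cong X(A/F_i)$ and $T(A/F)^H\cong T(A/F_i)$ under which the norm map $N_H\colon T(A/F)\to T(A/F)^H$ is identified with the corestriction $\pi^F_{F_i}$. Consequently
\[\hat{H}^0\bigl(H, T(A/F)\bigr)\;\cong\; T(A/F_i)/\im\bigl(\pi^F_{F_i}\bigr)\;=\;T^F(A/F_i),\]
and a diagram chase through the resulting long exact sequence expresses $\hat{H}^0(H, \overline{X}(A/F))$ functorially in terms of the groups $T^F(A/F_i)$ and the ranks $\srk(A/F_i)$ (the latter controlling the $\ZZ_p$-rank of $\overline{X}(A/F)^H$). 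The corestriction and restriction maps on these Tate cohomology groups correspond precisely to the arrows in~\eqref{yak diag}, so claim~(i) follows from Yakovlev's uniqueness statement.

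For claim~(ii), the vanishing of $T^F(A/F_i)$ for $0\leq i<n$ forces the Tate cohomology of $T(A/F)$ to vanish at every non-trivial subgroup of $P$ (the Herbrand quotient over a cyclic group being trivial for finite modules, vanishing of $\hat{H}^0$ implies vanishing of $\hat{H}^{-1}$), and thereby collapses the Yakovlev diagram of $\overline{X}(A/F)$. I would then produce the candidate permutation lattice $N = \bigoplus_{i=0}^{n} \ZZ_p[P/P_i]^{a_i}$, where $P_i$ denotes the unique subgroup of $P$ of index $p^i$ and where the multiplicities $a_i$ are pinned down from the ranks $\srk(A/F_i)$ by a routine recursion on successive differences. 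A direct computation of Tate cohomology of the modules $\ZZ_p[P/P_i]$ shows that $N$ realises the same Yakovlev diagram as $\overline{X}(A/F)$, so Yakovlev's uniqueness yields a $\ZZ_p[P]$-isomorphism $\overline{X}(A/F)\cong N$. The $\ZZ_p[P]$-restriction of $\overline{X}(A/F)$ is therefore a permutation lattice, and by the classical criterion from the theory of $p$-permutation modules any $\ZZ_p[G]$-lattice whose restriction to a Sylow $p$-subgroup is permutation is automatically a trivial source $\ZZ_p[G]$-module with vertex contained in that Sylow subgroup. This yields~(ii) in full.

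The main anticipated obstacle lies in the identification of the Yakovlev diagram carried out in the second paragraph: one must verify that the canonical isomorphisms supplied by the descent/control results are compatible not merely at the level of groups for each fixed $H$, but also with the corestriction and restriction maps as $H$ varies, so that the full diagram~\eqref{yak diag}, and not merely the individual groups $T^F(A/F_i)$, is recovered. Once that compatibility is in place, claim~(i) is immediate from Yakovlev and claim~(ii) follows from a straightforward matching of invariants together with the $p$-permutation module criterion.
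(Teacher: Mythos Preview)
Your high-level strategy—invoke Yakovlev's classification and identify the relevant Tate-cohomology diagram with the diagram~\eqref{yak diag} via the control result—is exactly the paper's, but the execution contains a genuine error in the identification step.

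Proposition~\ref{sha tate} gives an isomorphism of \emph{coinvariants} $X(A/F)_H\cong X(A/F^H)$ (this is the Pontryagin dual of the bijectivity of restriction on Selmer groups), not of invariants $X(A/F)^H\cong X(A/F^H)$ as you assert. The latter is false in general: for instance, if $|H|=p$, $T(A/F)=0$ and $\overline{X}(A/F)\cong R_1:=\ZZ_p[H]/\bigl(\sum_{h\in H}h\bigr)$, then $X(A/F)^{H}=0$ whereas $X(A/F^{H})\cong X(A/F)_{H}\cong\FF_p$. Consequently your claimed identification $\hat H^0\bigl(H,T(A/F)\bigr)\cong T^F(A/F_i)$ is incorrect, and the long-exact-sequence chase you propose does not recover the Yakovlev invariants of $\overline{X}(A/F)$ from the stated data. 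Your argument for~(ii) inherits the same problem: even granting your identification, the vanishing of $\hat H^*\bigl(H,T(A/F)\bigr)$ would only yield $\hat H^*\bigl(H,\overline{X}(A/F)\bigr)\cong\hat H^*\bigl(H,X(A/F)\bigr)$, and you have not shown the latter vanishes.

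The paper's route avoids all of this. Proposition~\ref{sha tate} already records the \emph{direct} identification
\[
T^F(A/F^J)\;\cong\;\hat H^{-1}\bigl(J,\overline{X}(A/F)\bigr),
\]
obtained by taking $J$-coinvariants of $0\to T(A/F)\to X(A/F)\to\overline{X}(A/F)\to 0$ and passing to torsion; no cohomology of $T(A/F)$ or of $X(A/F)$ enters. One then applies Yakovlev's theorem in its $\hat H^{-1}$ form (Lemma~\ref{yako}), which is stated for $\ZZ_p[G]$-modules and gives both claims at once: the diagram~\eqref{yak diag} determines $\overline{X}(A/F)$ up to trivial source summands with vertices in $P$, and the trivial source part is then pinned down over $\ZZ_p[P]$ by the ranks $\srk(A/F_i)$. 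Your proposed passage from the $\ZZ_p[P]$-structure to the $\ZZ_p[G]$-structure in~(ii) via the $p$-permutation criterion is a legitimate alternative to the paper's direct use of the $G$-equivariant Yakovlev result, but it becomes unnecessary once the identification step is carried out correctly.
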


In the context of Theorem \ref{yakovlev} it is worth bearing in mind that even if $G$ is cyclic of $p$-power order (and so $G =P$) the category of $\ZZ_p[G]$-lattices is in general very complicated. For example,  Heller and Reiner \cite{hr} have shown that in this case the number of isomorphism classes of indecomposable $\ZZ_p[G]$-lattices is infinite unless $|G|$ divides $p^2$ and have only been able to give an explicit classification of all such modules in the special cases $|G|=p$ and $|G|=p^2$.

Despite these difficulties, in \S\ref{examples} we will show that in certain cases the classification results of Heller and Reiner can be combined with our approach to make much more explicit the interplay between $\overline{X}(A/F)$ and the groups $T^F\!(A/{F_i})$ that is described in Theorem \ref{yakovlev}.

In a more general direction, we note that if $A$ and $F$ satisfy the hypotheses~\ref{hyp_a}--\ref{hyp_e} then, irrespective of the abstract structure of $P$, the result of Lemma~\ref{sha tate2} below shows that Theorem~\ref{yakovlev} can be applied to Galois extensions $F'/k'$ with $k \subseteq k'\subseteq F'\subseteq F$ and for which $G_{F'/k'}$ has cyclic Sylow $p$-subgroups.
 This shows that in all cases the behaviour of the natural restriction and corestriction maps on $T(A/L)$ as $L$ varies over all intermediate fields of $F/k$ imposes a strong restriction on the explicit structure of the $\ZZ_p[G]$-module $X(A/F)$.

 The following result gives an explicit example of this phenomenon (and for more details in this regard see Remark~\ref{elder} below). We recall that
 a finite group $\Gamma$ is defined to be supersolvable if it has a normal series in which all subgroups are normal in $\Gamma$ and all factors are cyclic groups.

\begin{corollary}\label{general yak}  Assume that $A_{/k}$ has good reduction at all $p$-adic places and ordinary reduction at all $p$-adic places that ramify in $F/k$. Assume also that $A$ and $F$ satisfy the hypotheses~\ref{hyp_a}--\ref{hyp_e}, that $P$ is both abelian and normal and that the quotient $G/P$ is supersolvable.

Then for each irreducible $\QQ_p^c$-valued character $\rho$ of $G$ there is a diagram of torsion groups of the form~\eqref{yak diag} which determines the multiplicity with which $\rho$ occurs in the $\QQ_p^c[G]$-module $\QQ_p^c\otimes X(A/F)$ up to an error of at most $\srk(A/{K})$.

In particular, if the group $\Sel_p(A/K)$ is finite, then the structure of the $\QQ_p[G]$-module $\QQ_p\otimes X(A/F)$ is uniquely determined by the torsion
groups $T(A/L)$ as $L$ ranges over intermediate fields of $F/k$ together with the natural restriction and corestriction maps between them.
\end{corollary}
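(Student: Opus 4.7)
The strategy is to reduce the computation of each multiplicity $m_\rho$ (for $\rho\in\Ir(G)$) to an application of Theorem~\ref{yakovlev} on a suitable cyclic $p$-subextension of $F/k$. The key algebraic input is the claim that every $\rho\in\Ir(G)$ is monomial with inducing subgroup containing $P$: that is, $\rho=\Ind_J^G(\psi)$ for some $J$ with $P\subseteq J\subseteq G$ and some linear character $\psi$ of $J$. To establish this, I would apply Clifford theory to $P\triangleleft G$: writing $\rho|_P=e(\chi_1+\cdots+\chi_r)$ as the sum of a $G$-orbit of linear characters and setting $I:=I_G(\chi_1)$, one has $\rho=\Ind_I^G(\rho')$ with $\rho'|_P=e\chi_1$. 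Since $P$ is a Sylow $p$-subgroup of $I$ and $|I/P|$ is coprime to $p$, Schur--Zassenhaus furnishes a complement $Q$ to $P$ in $I$, and the $Q$-invariance of $\chi_1$ lets it extend to a linear character $\tilde\chi_1$ of $I=P\rtimes Q$. Then $\rho'=\tilde\chi_1\otimes\rho''$ for some $\rho''\in\Ir(I/P)$, and the supersolvability of $I/P\leq G/P$ lets one write $\rho''$ as induced from a linear character; transitivity of induction yields the desired form of $\rho$.

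Set $V:=\QQ_p^c\otimes X(A/F)$. By Frobenius reciprocity, $m_\rho=\langle\psi,V|_J\rangle_J$, which I analyse by cases on $\psi|_P$. If $\psi|_P\neq\eins_P$, set $N:=\ker\psi\triangleleft J$. Then $J/N$ is cyclic and its Sylow $p$-subgroup $P/(P\cap N)$ is cyclic, so by Lemma~\ref{sha tate2} the hypotheses~\ref{hyp_a}--\ref{hyp_e} pass to the extension $F^N/F^J$, and Theorem~\ref{yakovlev}(i) determines $\overline{X}(A/F^N)$ as a $\ZZ_p[J/N]$-module from a diagram of shape~\eqref{yak diag}. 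Combined with the standard Galois descent identification $V^N\cong\QQ_p^c\otimes X(A/F^N)$ of $\QQ_p^c[J/N]$-modules (valid under our hypotheses because $|N|$ is invertible in $\QQ_p^c$), this pins down $m_\rho=\dim_{\QQ_p^c}(V^N)^\psi$ exactly in terms of the torsion data. If instead $\psi|_P=\eins_P$, then $V^\psi\subseteq V^P\cong\QQ_p^c\otimes X(A/K)$, whence $m_\rho\leq\srk(A/K)$; this accounts for the claimed error term.

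For the second assertion, finiteness of $\Sel_p(A/K)$ forces $\srk(A/K)=0$, so the error term vanishes and every $m_\rho$ is exactly determined by the groups $T(A/L)$ for $L$ ranging over intermediate fields of $F/k$, together with the natural restriction and corestriction maps (the cokernels $T^{F^N}(A/L)$ appearing in each Yakovlev diagram being, by definition, computable from such data). Since a $\QQ_p[G]$-module in characteristic zero is determined up to isomorphism by its $\QQ_p^c$-valued character, this identifies the structure of $\QQ_p\otimes X(A/F)$. The main technical obstacle I foresee is the careful bookkeeping needed to check that Lemma~\ref{sha tate2} does supply the hypotheses required to invoke Theorem~\ref{yakovlev} on each extension $F^N/F^J$ that arises, and to establish the Galois descent isomorphisms $V^H\cong\QQ_p^c\otimes X(A/F^H)$ uniformly as $H$ varies; once this is done the remaining arguments are formal consequences of Clifford theory and Theorem~\ref{yakovlev}.
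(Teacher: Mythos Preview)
Your overall strategy matches the paper's: reduce via monomiality to computing multiplicities on a cyclic subextension and then invoke Yakovlev's result there. Your Clifford-theoretic argument for writing $\rho=\Ind_J^G(\psi)$ with $P\subseteq J$ and $\psi$ linear is correct (and more explicit than the paper's citation of Serre). However, your Case~1 contains a genuine gap.

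You assert that Theorem~\ref{yakovlev}(i) ``determines $\overline{X}(A/F^N)$ as a $\ZZ_p[J/N]$-module from a diagram of shape~\eqref{yak diag}'' and hence pins down $m_\psi$ exactly. This overstates the theorem on two counts. First, Theorem~\ref{yakovlev}(i) only determines the module over the Sylow $p$-subgroup $P'=PN/N$ of $J/N$, not over $J/N$ itself; since $[J/N:P']$ can exceed~$1$, the $\ZZ_p[P']$-structure yields only the aggregate $\sum_{\psi'|_{P'}=\psi|_{P'}} m_{\psi'}$, not the individual $m_\psi$. Second, even the $\ZZ_p[P']$-structure requires the diagram \emph{together with} the ranks $\srk(A/F_i)$; the diagram alone determines the module only up to a trivial source summand. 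So exactness in Case~1 does not follow, and since a free summand $\ZZ_p[J/N]$ of that trivial source piece contributes to $m_\psi$, the error is genuinely nonzero in general.

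The paper avoids both issues by invoking not the statement but the \emph{proof} of Theorem~\ref{yakovlev} (that is, Lemma~\ref{yako}): the diagram determines a $\ZZ_p[J/N]$-module decomposition $\overline{X}(A/F^N)=M_1\oplus M_2$ with $M_1$ fixed up to isomorphism and $M_2$ a trivial source module with vertices in $P'$. One then uses the structure of trivial source modules over a $p$-group to bound the $\psi$-multiplicity in $M_2$ by the $\eins_{P'}$-multiplicity, which is $\dim_{\QQ_p}(\QQ_p\otimes M_2^{P'})\le\srk(A/K)$. This bound is uniform in $\psi$, so your case split on $\psi|_P$ is unnecessary: the error is in every case governed by the trivial source piece and bounded by $\srk(A/K)$. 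With Case~1 corrected in this way, your argument coincides with the paper's, and your treatment of the second assertion then goes through unchanged.
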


\subsection{Some consequences for ranks}\label{rank consequences}

In this section we describe several explicit consequences concerning the ranks of Selmer modules which follow from the structure results stated above.

In the first such result we do not restrict the abstract structure of $G$.

\begin{theorem}\label{upper bound}
Assume that $A$ and $F$ satisfy the hypotheses~\ref{hyp_a}--\ref{hyp_e}.
\begin{enumerate}
\item[(i)]\label{upper bound_i}
  Fix a chain of fields $K = F_0\subset F_1 \subset \dots \subset F_n = F$ with each extension $F_i/F_{i-1}$ Galois of degree $p$ and for each index $i$ define an integer $d_i$ by the equality $p^{d_i} = |T^{F_i}(A/{F_{i-1}})|$. Then one has
  \begin{equation*}
    \srk(A/K) + (p-1)\sum_{i=1}^{i=n} d_i \le \srk(A/F) \le [F:K] \cdot \srk(A/K) + (p-1)\sum_{i=1}^{i=n} p^{n-i} d_i.
  \end{equation*}
  \item[(ii)]\label{upper bound_ii}
    Fix $\rho\in \Ir(P)$. If $T^{F^{\ker(\rho)}}\!(A/{F^H})$ vanishes for all $H$ with $\ker(\rho) < H \le P$, then $m_{\rho} \le \rho(1)\cdot\srk(A/K)$.
  \item[(iii)]\label{upper bound_iii}
    If $T^F(A/{F^H})$ vanishes for all $H\leq P$, then $\srk(A/F) \le [F:K]\cdot\srk(A/K)$.
  \end{enumerate}
\end{theorem}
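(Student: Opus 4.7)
The plan is to prove all three parts through a common mechanism: an integral descent statement which, for any cyclic degree-$p$ extension $L \subset M$ with Galois group $C$, identifies the natural map $\overline{X}(A/M)_C \to \overline{X}(A/L)$ as a surjection whose (finite) kernel has order exactly $|T^M\!(A/L)|$. This should follow from the analysis underpinning Theorems~\ref{noether} and~\ref{yakovlev} via a snake lemma applied to $0 \to T(A/M) \to X(A/M) \to \overline{X}(A/M) \to 0$, the point being that the hypotheses~\ref{hyp_a}--\ref{hyp_e} force the corresponding rational descent map $\QQ_p\otimes X(A/M)_C \to \QQ_p\otimes X(A/L)$ to be bijective. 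I combine this with the classical Heller-Reiner classification: every $\ZZ_p[C]$-lattice decomposes as $\ZZ_p^{a} \oplus \ZZ_p[C]^{b} \oplus \ZZ_p[\zeta_p]^{r}$ for uniquely determined integers $a,b,r \ge 0$.

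For part (i) I would induct on $n$, reducing to the case $n=1$. Writing $\overline{X}(A/F_1)$ in the above Heller-Reiner form, a direct computation gives $\srk(A/F_1) = a+pb+(p-1)r$ and shows that the torsion subgroup of the coinvariants $\overline{X}(A/F_1)_C$ is isomorphic to $\FF_p^r$. Since $\overline{X}(A/F_0)$ is torsion-free and the rational descent forces the kernel of $\overline{X}(A/F_1)_C\twoheadrightarrow \overline{X}(A/F_0)$ to be finite, this kernel must coincide with $\FF_p^r$, so that $\srk(A/F_0) = a+b$ and $d_1 = r$. The elementary inequalities $(p-1)r \le (p-1)(b+r)$ and $a+pb+(p-1)r \le p(a+b)+(p-1)r$ then yield the base case, and iteration gives the stated bounds in general.

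Part (iii) follows from part (i) applied to a chain refining a composition series of $P$: since the norm $T(A/F) \to T(A/F_{i-1})$ factors through $T(A/F_i)$, the hypothesis $T^F\!(A/F^H) = 0$ for every $H \le P$ forces each $d_i$ to vanish. For part (ii) set $H_0 := \ker(\rho)$; the assumed vanishing (together with the same factoring-of-norms observation) is exactly what is required to apply part (iii) to the extension $F^{H_0}/K$ with group $P/H_0$. Iterating the descent isomorphisms (each now with trivial kernel) along a composition series of $P/H_0$ identifies the $P/H_0$-coinvariants of $\overline{X}(A/F^{H_0})$ with $\overline{X}(A/K)$, which is $\ZZ_p$-free of rank $\srk(A/K)$. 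Since $\ZZ_p[P/H_0]$ is local with residue field $\FF_p$ (as $P/H_0$ is a $p$-group), Nakayama's lemma produces a $\ZZ_p[P/H_0]$-module surjection $\ZZ_p[P/H_0]^{\srk(A/K)} \twoheadrightarrow \overline{X}(A/F^{H_0})$. Tensoring with $\QQ_p^{c}$, and identifying $\QQ_p^{c}\otimes X(A/F^{H_0})$ with the $H_0$-invariants of $\QQ_p^{c}\otimes X(A/F)$, the multiplicity $m_\rho$ is bounded by the multiplicity of $\rho$ in $\QQ_p^{c}[P/H_0]^{\srk(A/K)}$, which is exactly $\rho(1)\srk(A/K)$.

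The principal technical obstacle is establishing the base-case descent with its precise kernel size; once this snake-lemma calculation is in place, the remaining chain-induction and Nakayama arguments are essentially formal.
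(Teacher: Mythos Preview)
Your argument is correct, and for part~(i) it coincides with the paper's: both use the Heller--Reiner classification of $\ZZ_p[C]$-lattices for $|C|=p$ together with the identification $T^{F_i}(A/F_{i-1})\cong\hat H^{-1}\bigl(C,\overline{X}(A/F_i)\bigr)$ from Proposition~\ref{sha tate} (your snake-lemma formulation of the descent kernel is equivalent to this). The only point you leave implicit is that the hypotheses~\ref{hyp_a}--\ref{hyp_e} persist along the chain, which is supplied by Lemma~\ref{sha tate2}(ii).

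The genuine divergence is in parts~(ii) and~(iii). The paper first reduces~(iii) to~(ii) by summing $m_\rho\rho(1)\le\rho(1)^2\srk(A/K)$ over $\Ir(P)$, and then proves~(ii) by invoking the monomial property of $p$-groups to write $\rho=\Ind_J^P\Inf_Q^J(\psi)$ with $Q$ cyclic, applying Theorem~\ref{yakovlev}(ii) to obtain a trivial-source structure on $\overline{X}(A/F')$, and reading off the multiplicity bound from that structure. Your route is the reverse and more elementary: you obtain~(iii) directly from~(i) with all $d_i=0$, and for~(ii) you bypass both monomial induction and Yakovlev's theorem entirely, using only the vanishing of $T^{F^{\ker(\rho)}}(A/K)$ to get $\overline{X}(A/F^{\ker(\rho)})_{P/\ker(\rho)}\cong\overline{X}(A/K)$ and then Nakayama over the local ring $\ZZ_p[P/\ker(\rho)]$ to produce the surjection from $\ZZ_p[P/\ker(\rho)]^{\srk(A/K)}$. (In fact you do not even need to iterate along a composition series here: Proposition~\ref{sha tate} applied once with $J=P/\ker(\rho)$ already gives the coinvariant identification.) Your approach is shorter and self-contained; the paper's has the side benefit of exhibiting the trivial-source structure in the cyclic pieces, which feeds into its other results.
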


In certain cases the additional hypotheses in Theorem~\ref{upper bound} can be simplified. For example, if $P$ is abelian of exponent $p$, then for any non-trivial
$\rho$ in $\Ir(P)$ the inclusion $K \subseteq L\subsetneq F^{\ker(\rho)}$ implies $L = K$ and so the hypothesis in claim (ii) is satisfied if
$T^{F^{\ker(\rho)}}\!(A/K)$ vanishes and hence, a fortiori, if $\sha(A/K)_p$ vanishes. Similarly, if $|P|=p$ then the hypothesis in claim (iii) is satisfied if $\sha(A/K)_p$ vanishes.

In the remainder of this section we specialise the extension $F/k$ and variety $A$ in order to obtain finer bounds on Selmer rank than are given in Theorem~\ref{upper bound}.

\subsubsection{Dihedral extensions} In the next result we focus on the case that the Sylow $p$-subgroup $P$ is an abelian (normal) subgroup of $G$ of index two and that the conjugation action of any lift to $G$ of the generator of $G/P$ inverts elements of $P$. Using the terminology of Mazur and Rubin~\cite{mr2}, we therefore say that the pair $(G,P)$ is `dihedral'.

This result tightens the bounds on ranks of Selmer modules in dihedral towers of numbers fields that are proved by Mazur and Rubin in \cite{mr2}.

\begin{corollary}\label{mr cor}
 Assume that $p$ is odd, that $A$ is an elliptic curve for which $A$ and $F$ satisfy the hypotheses~\ref{hyp_a}--\ref{hyp_e} and that the pair $(G,P)$ is dihedral (in the above sense). Assume also that all $p$-adic places of $k$ split in the quadratic extension $K/k$. Then the following claims are valid.

\begin{enumerate}
  \item[(i)]\label{mr cor_ii}  If $\srk(A/K)$ is odd, $P$ is cyclic and $T^F(A/{F^H})$ vanishes for all subgroups $H$ of $P$, then $\overline{X}(A/F)$ is a
trivial source $\ZZ_p[G]$-module with vertices contained in $P$ that has a non-zero projective direct summand. In particular, in this case for all characters $\rho$ in $\Ir(P)$ one has $1 \le m_\rho \le \srk(A/K)$.

  \item[(ii)]\label{mr cor_i} If $\srk(A/K) = 1$ and $\sha(A/K)_p$ vanishes, then $\sha(A/{F^H})_p$ vanishes for all subgroups $H$ of $P$, $X(A/F)$ is a projective $\ZZ_p[G]$-module and $m_\rho = 1$ for all characters $\rho$ in $\Ir(P)$.
\end{enumerate}
\end{corollary}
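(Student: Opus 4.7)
For part (i), the plan is to apply Theorem \ref{yakovlev}(ii), whose hypotheses (cyclicity of $P$ and vanishing of $T^F(A/F^H)$ for all $H\le P$) are imposed by assumption, to conclude that $\overline{X}(A/F)$ is a trivial source $\ZZ_p[G]$-module with vertices in $P$. The existence of a nonzero projective direct summand is then extracted using the dihedral structure: since $p$ is odd and $[G:P]=2$, modular representation theory gives exactly two indecomposable projective $\ZZ_p[G]$-lattices, covering the trivial and sign characters of $G/P$, while every other indecomposable trivial source lattice with vertex in $P$ is inflated from a proper dihedral quotient $G/V$ (with $V$ nontrivial in $P$) and hence, as a $\QQ_p^c[G]$-representation, involves only characters of $P$ of bounded order. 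The splitting hypothesis at $p$-adic primes combined with the Mazur--Rubin parity analysis in \cite{mr2} then shows that the oddness of $\srk(A/K)$ forces a growth of $\srk(A/F^H)$ along the tower which is incompatible with $\overline{X}(A/F)$ consisting solely of non-projective indecomposable summands; hence at least one projective indecomposable occurs as a direct summand.

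Given the nonzero projective summand, its $\QQ_p^c$-completion contains every irreducible $\QQ_p^c[G]$-representation with positive multiplicity, so each $\rho\in\Ir(P)$ appears in $\QQ_p^c\otimes X(A/F)$ upon restriction, yielding $m_\rho\ge 1$. For the upper bound, Theorem \ref{upper bound}(ii) applies: its hypothesis on vanishing of $T^{F^{\ker(\rho)}}(A/F^H)$ for $\ker(\rho)<H\le P$ follows from the assumed vanishing of $T^F(A/F^H)$ because the norm $T(A/F)\to T(A/F^H)$ factors through $T(A/F^{\ker(\rho)})$, yielding $m_\rho\le\rho(1)\cdot\srk(A/K)=\srk(A/K)$.

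For part (ii), the plan is to bootstrap via part (i) and Corollary \ref{noether cor}. The preliminary step is to establish $T^F(A/F^H)=0$ for all $H\le P$: Cassels--Tate duality and finiteness give $T(A/K)=\sha(A^t/K)_p=0$ from $\sha(A/K)_p=0$, and an inductive descent along the $p$-tower in $F/K$ (using the splitting hypothesis and the local control from \ref{hyp_a}--\ref{hyp_e}, compatible with the Kolyvagin/Mazur--Rubin rank formulas in the dihedral setting) propagates the vanishing to all intermediate levels. Applying part (i) then gives $1\le m_\rho\le \srk(A/K)=1$, forcing $m_\rho=1$ for all $\rho\in\Ir(P)$. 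Since $\rho(1)=1$ as $P$ is abelian, condition (iii) of Corollary \ref{noether cor} is satisfied, yielding that $X(A/F)$ is projective over $\ZZ_p[G]$. Projectivity gives cohomological triviality, in particular $T(A/F^H)=0$ for all $H\le P$ via condition (iv) of Corollary \ref{noether cor}, translating to $\sha(A^t/F^H)_p=0$; Cassels--Tate duality finally gives $\sha(A/F^H)_p=0$.

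The principal difficulty lies in part (i), specifically in ruling out that $\overline{X}(A/F)$ is composed entirely of non-projective indecomposable trivial source summands: this step requires dihedral-specific parity input from \cite{mr2} together with the splitting assumption at $p$-adic primes to control the growth of Selmer ranks along the tower, which does not follow from the trivial-source classification alone. In part (ii), the analogous obstacle is the preliminary descent establishing $T^F(A/F^H)=0$, which again relies on the interplay between $p$-adic splitting and the dihedral Heegner-type machinery.
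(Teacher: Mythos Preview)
Your approach to part (i) is essentially the paper's: Theorem~\ref{yakovlev}(ii) gives the trivial source structure, and Mazur--Rubin's Theorem~B (which you invoke as ``parity analysis forcing growth'') furnishes a free $\QQ_p[P]$-summand of $\QQ_p\otimes X(A/F)$; for a trivial source module over a cyclic $p$-group this lifts to an integral $\ZZ_p[P]$-summand, and hence (since every subgroup of $P$ is normal in $G$ and $[G:P]$ is prime to $p$) to a projective $\ZZ_p[G]$-summand. Your derivation of the bounds on $m_\rho$ via Theorem~\ref{upper bound}(ii) is also fine, though one can read the upper bound off the trivial source decomposition directly.

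Your approach to part (ii), however, has a genuine gap. You route through part (i), but part (i) assumes $P$ is \emph{cyclic}, whereas part (ii) does not (in the dihedral setting $P$ is only assumed abelian). More seriously, your ``preliminary step'' of establishing $T^F(A/F^H)=0$ for all $H\le P$ by ``inductive descent'' is neither justified nor needed: you have not explained how the vanishing of $\sha(A/K)_p$ propagates upward through the tower, and the appeal to ``Kolyvagin/Mazur--Rubin rank formulas'' does not supply this. The paper's argument avoids the problem entirely. It applies Mazur--Rubin's Theorem~B directly to obtain $m_\rho\ge 1=\rho(1)\cdot\srk(A/K)$ for every $\rho\in\Ir(P)$ (using only that $P$ is abelian); this is exactly condition (iii) of Corollary~\ref{noether cor}, whose equivalence with condition (i) there (under the standing hypothesis $\sha(A/K)_p=0$) immediately gives that $X(A/F)$ is projective over $\ZZ_p[G]$. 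The vanishing of $T(A/F^H)$ for all $H\le P$ and the equalities $m_\rho=1$ then fall out of condition (iv), as \emph{consequences} of projectivity rather than prerequisites for it. In short, the logical order of your argument for (ii) is inverted, and the step you flag as the ``analogous obstacle'' simply does not arise.
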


\subsubsection{False Tate curve towers} A `false Tate curve tower' is obtained as the union over natural numbers $m$ and $n$ of fields $K_d^{m,n}:= \QQ\bigl(\sqrt[p^m]{d},\zeta_{p^{n}}\bigr)$ where $d$ is a fixed integer, $m\le n$ and $\zeta_{p^{n}}$ is a choice of primitive $p^{n}$-th roots of unity in $\QQ^c$. Such extensions have recently been investigated in the context of non-commutative Iwasawa theory.

Since $K_d^{m,n}$ is a bicyclic $p$-extension of $\QQ(\zeta_p)$ there are several ways in which the results stated above can be used to restrict the structure of $X(A/{K_d^{m,n}})$.

In the following result we give an example of this phenomenon.

\begin{corollary}\label{ftct cor} Fix a natural number $d$. Let $A$ be an abelian variety over $\QQ$ that has good reduction at all prime divisors of $d$ and good ordinary reduction at $p$. Assume that $A$ has no point of order $p$ over $\QQ(\zeta_p)$, that the reduction of $A$ at $p$ has no point of order $p$ over $\mathbb{F}_p$ and that no Tamagawa number of $A$ over $\QQ(\zeta_p)$ is divisible by $p$.

Let $n$ be any natural number for which $\sha\bigl(A/{\QQ(\zeta_{p^{n}})}\bigr)_p$ vanishes and set
\[ n_d := \begin{cases} &\text{least integer $m$ with both $0\le m \le n$ and $\sha(A/{K_d^{m,n}})_p\not= 0$},\\
                        &n \,\,\,\,\text{if no such integer $m$ exists.}\end{cases}\]
Write $P_{n}$ and $G_{n}$ for the Galois groups of $K_d^{n_d,n}$ over $\QQ(\zeta_{p^n})$ and over the subextension of $\QQ(\zeta_{p^n})$ of degree $p^{n-1}$ over $\QQ$ respectively. Then the following claims are valid.

\begin{itemize}
\item[(i)] $\overline{X}(A/{K_d^{n_d,n}})$ is a
trivial source $\ZZ_p[G_n]$-module with vertices contained in $P_n$.
\item[(ii)] The multiplicity $m_\rho$ with which each $\rho$ in $\Ir(G_{n})$ occurs in the module $\QQ^c_p\otimes X(A/{K_d^{n_d,n}})$ is at most $\srk\bigl(A/{\QQ(\zeta_{p^n})}\bigr)$. Further if $\rho$ and $\rho'$ are any non-linear characters in $\Ir(G_{n})$, then $m_\rho \le m_{\rho'}$ if and only if $\ker(\rho) \subseteq \ker(\rho')$.
\item[(iii)] For any complementary subgroup $H$ to $P_{n}$ in $G_{n}$ and any $\phi$ in $\Ir(H)$ one has $\rk_{\ZZ_p}(e_\phi \cdot X(A/{K_d^{n_d,n}})) \le p^{n_d}\cdot \rk_{\ZZ_p}\bigr(e_\phi \cdot X(A/{\QQ(\zeta_{p^n})})\bigr)$.
    \end{itemize}\end{corollary}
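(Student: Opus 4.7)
The plan is to apply Theorem~\ref{yakovlev} and Theorem~\ref{upper bound} to the extension $F := K_d^{n_d,n}$ over the subfield $k_0$ of $\QQ(\zeta_{p^n})$ of degree $p^{n-1}$ over $\QQ$. The Galois group $G_n = \Gal(F/k_0)$ then has cyclic Sylow $p$-subgroup $P_n = \Gal(F/\QQ(\zeta_{p^n}))$ of order $p^{n_d}$ and coprime-to-$p$ quotient $G_n/P_n$ of order $p-1$, so that $F^{P_n} = \QQ(\zeta_{p^n})$ plays the role of $K$ in the hypotheses. First one verifies \ref{hyp_a}--\ref{hyp_e}: \ref{hyp_a} follows since $\QQ(\zeta_{p^n})/\QQ(\zeta_p)$ is a pro-$p$ extension; \ref{hyp_c} and \ref{hyp_e} are immediate from good reduction of $A$ at $p$ and at prime divisors of $d$ combined with the fact that $F/\QQ$ ramifies only at $p$ and at such primes; \ref{hyp_d} follows from good ordinary reduction at $p$, the total ramification of $\QQ(\zeta_{p^n})/\QQ$ at $p$ (preserving residue field $\FF_p$), and the hypothesis $\tilde A(\FF_p)[p]=0$; and \ref{hyp_b} follows since $\QQ(\zeta_{p^n})/\QQ(\zeta_p)$ is unramified outside $p$, where $A$ has good reduction.

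For claim (i), the definition of $n_d$ gives $\sha(A/K_d^{m,n})_p = 0$ for each $m<n_d$, hence also $\sha(A^t/K_d^{m,n})_p = 0$ by Cassels--Tate duality, so that $T(A/K_d^{m,n}) = 0$ and in particular $T^F(A/K_d^{m,n}) = 0$. Since $P_n$ is cyclic, Theorem~\ref{yakovlev}(ii) applied to $F/\QQ(\zeta_{p^n})$ then directly yields that $\overline{X}(A/F)$ is a trivial source $\ZZ_p[G_n]$-module with vertices contained in $P_n$.

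For claim (ii), Clifford theory applied to the abelian normal subgroup $P_n$ shows that each $\rho\in\Ir(G_n)$ restricts to $P_n$ as the sum of characters in a single $G_n/P_n$-orbit $\mathcal{O}\subset\Ir(P_n)$ (the extension to the stabilizer being trivial since $G_n/P_n$ has order prime to $p$), and that $m_\rho$ equals the $P_n$-multiplicity $m_\chi$ of any $\chi\in\mathcal{O}$. The bound $m_\chi\le\chi(1)\cdot\srk(A/\QQ(\zeta_{p^n})) = \srk(A/\QQ(\zeta_{p^n}))$ for $\chi\in\Ir(P_n)$ then follows from Theorem~\ref{upper bound}(ii), the vanishing hypothesis being supplied by $T(A/K_d^{m,n}) = 0$ for $m<n_d$. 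For the monotonicity one writes the rational form $\QQ_p^c\otimes\overline{X}(A/F) \cong \bigoplus_{H\le P_n}\QQ_p^c[G_n/H]^{a_H}$ coming from (i), from which a direct calculation yields $m_\chi = \sum_{H\le\ker(\chi)}a_H$; this is manifestly monotonic in $\ker(\chi)$, and for non-linear $\rho$ one has $\ker(\rho)=\ker(\chi)\le P_n$ since subgroups of cyclic $P_n$ are characteristic (ensuring $\ker(\rho)\cap P_n = \ker(\chi)$) and since any $g\in G_n\setminus P_n$ acts non-trivially on the cosets of $P_n$ in $G_n$ (ensuring $\rho(g)\neq 1$).

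For claim (iii), using the same decomposition, a Mackey computation (exploiting $H\cap P_n=\{e\}$) gives $\dim_{\QQ_p^c}\bigl(e_\phi\,\QQ_p^c[G_n/H]\bigr) = \phi(1)^2\cdot[P_n:H]$, whilst a standard control-theoretic identification of the $P_n$-coinvariants of $\QQ_p^c\otimes X(A/F)$ with $\QQ_p^c\otimes X(A/\QQ(\zeta_{p^n}))$ yields $\dim_{\QQ_p^c}\bigl(e_\phi(\QQ_p^c\otimes X(A/\QQ(\zeta_{p^n})))\bigr) = \phi(1)^2\sum_H a_H$. The stated inequality then reduces to the elementary bound $\sum_H a_H\cdot[P_n:H]\le p^{n_d}\sum_H a_H$, which is immediate since $[P_n:H]\le p^{n_d}$ for each $H\le P_n$. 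The main obstacle is the careful verification of hypothesis \ref{hyp_b} after enlargement of the base field to $\QQ(\zeta_{p^n})$, together with the control-theoretic identification of $P_n$-coinvariants required in (iii).
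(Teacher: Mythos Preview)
Your verification of hypotheses \ref{hyp_a}--\ref{hyp_e} and your proof of claim (i) are correct and match the paper's approach (the paper packages the hypothesis check into an appeal to Lemma~\ref{sha tate2}, but the content is the same). Your use of Theorem~\ref{upper bound}(ii) for the multiplicity bound in (ii) is a legitimate alternative to the paper's direct appeal to the permutation structure over $P_n$, and your identification $m_\rho=m_\chi$ via Frobenius reciprocity for non-linear $\rho=\Ind_{P_n}^{G_n}\chi$ is exactly what the paper uses.

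There is, however, a genuine gap in your treatment of the monotonicity in (ii) and of claim (iii): the rational decomposition $\QQ_p^c\otimes\overline{X}(A/F)\cong\bigoplus_{H\le P_n}\QQ_p^c[G_n/H]^{a_H}$ as $\QQ_p^c[G_n]$-modules does \emph{not} follow from (i). Claim (i) only says that $\overline{X}(A/F)$ is a \emph{summand} of such a direct sum; the indecomposable trivial source $\ZZ_p[G_n]$-modules with vertex $H'\le P_n$ are indexed by pairs $(H',\phi)$ with $\phi\in\Ir(G_n/P_n)$, and in particular the linear characters of $G_n$ can occur in $\QQ_p^c\otimes\overline{X}(A/F)$ with unequal multiplicities, which is incompatible with your decomposition (in $\QQ_p^c[G_n/H]$ every linear character appears exactly once). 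For the monotonicity in (ii) this is easily repaired: restrict to $P_n$, where $\overline{X}(A/F)$ genuinely is a permutation module $\bigoplus_{H'}\ZZ_p[P_n/H']^{c_{H'}}$, giving $m_\chi=\sum_{H'\le\ker\chi}c_{H'}$; this is precisely the paper's argument. For (iii), your Mackey computation of $\dim e_\phi\,\QQ_p^c[G_n/H']$ is correct, but it is applied to a decomposition you have not established; the paper instead argues via the inequality $m_\rho\le\rho(1)\,m_\phi$ for each $\rho$ containing $\phi$ in its restriction to the complement, and then sums as in the proof of Theorem~\ref{upper bound}(iii).
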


\section{Proofs}\label{Proof of the main structure results}

In this section we prove all of the results that are stated above. More precisely, after making some important general observations in \S\ref{ago} we prove Theorem \ref{yakovlev} and Corollary \ref{general yak} in \S\ref{proof yakovlev}, Theorem \ref{upper bound} in \S\ref{proof of upper bound}, Theorem \ref{noether} in \S\ref{proof of noether}, Corollary \ref{noether cor} in \S\ref{proof of noether cor} and then finally Corollaries \ref{mr cor} and \ref{ftct cor} in \S\ref{last}.

\subsection{General observations}\label{ago}

In the following result we write $N_G(J)$ for the normaliser of a subgroup $J$ of $G$.

\begin{proposition}\label{sha tate}
 Assume that $A$ and $F$ satisfy the hypotheses ~\ref{hyp_a}--\ref{hyp_e}. Then for each subgroup $J$ of $P$ the natural norm map $X(A/{F})_J \to X(A/{F^J})$ is bijective and the group $T^{F}(A/{F^J})$ is isomorphic as a $\ZZ_p\bigl[N_G(J)\bigr]$-module to the Tate cohomology group $\hat H^{-1}\bigl(J,\overline{X}(A/F)\bigr)$.
\end{proposition}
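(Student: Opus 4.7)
The plan is to prove the proposition in two stages. The first stage establishes the bijectivity of the norm map $X(A/F)_J \to X(A/{F^J})$ via a ``control'' statement for Selmer groups. The second stage deduces the identification of $T^F(A/{F^J})$ from this by a formal argument involving Tate cohomology.

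For the first stage I would apply Pontryagin duality to reduce bijectivity of the norm map to showing that the natural restriction
\[
\Sel_p(A/{F^J}) \longrightarrow \Sel_p(A/F)^J
\]
is bijective. As a preliminary, hypothesis \ref{hyp_a}, combined with the elementary observation that any non-trivial action of a $p$-group on a non-zero $\FF_p$-vector space has non-zero fixed points, forces $A(F)[p]=0$ and hence $A(F)[p^\infty]=0$. The inflation-restriction sequence then collapses to give $H^1(F^J,A[p^\infty]) \cong H^1(F,A[p^\infty])^J$. The remainder of the first stage is the place-by-place verification that this isomorphism respects the defining Selmer local conditions: places outside $p$ are handled using hypotheses \ref{hyp_b} and \ref{hyp_e}, while the $p$-adic places are controlled by the analysis of the local ordinary condition due to Greenberg in \cite{RG}, exploiting \ref{hyp_c} and \ref{hyp_d}. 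This local verification is where I expect the main technical effort of the argument to lie.

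For the second stage I would apply the right-exact functor $(-)_J$ of $J$-coinvariants to the tautological short exact sequence
\[
0 \to T(A/F) \to X(A/F) \to \overline{X}(A/F) \to 0,
\]
to obtain an exact sequence $T(A/F)_J \to X(A/F)_J \to \overline{X}(A/F)_J \to 0$. Combined with the first-stage isomorphism $X(A/F)_J \cong X(A/{F^J})$, the torsion subgroup $T(A/{F^J})$ of $X(A/{F^J})$ identifies with the $\ZZ_p$-torsion subgroup of $X(A/F)_J$, while the image of the norm map $T(A/F) \to T(A/{F^J})$ identifies with the image of $T(A/F)_J \to X(A/F)_J$, which by exactness is the kernel of $X(A/F)_J \to \overline{X}(A/F)_J$. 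A short diagram chase, exploiting that any lift to $X(A/F)_J$ of a $\ZZ_p$-torsion element of $\overline{X}(A/F)_J$ is itself $\ZZ_p$-torsion (since the kernel of the surjection is torsion), then identifies $T^F(A/{F^J})$ with the $\ZZ_p$-torsion subgroup of $\overline{X}(A/F)_J$.

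Finally, for any $\ZZ_p[J]$-lattice $M$ the kernel of the norm map $N_J : M_J \to M^J$ coincides with the $\ZZ_p$-torsion subgroup of $M_J$: indeed the inclusion $\ker(N_J) \supseteq (M_J)_{\tors}$ holds because $M^J$ is $\ZZ_p$-torsion-free, while the reverse inclusion follows from the fact that after tensoring with $\QQ_p$ the norm map becomes an isomorphism (since $|J|$ is invertible in $\QQ_p$). Applying this to $M = \overline{X}(A/F)$, which is a $\ZZ_p[J]$-lattice by construction, and invoking the definition of Tate cohomology yields $(\overline{X}(A/F)_J)_{\tors} \cong \hat H^{-1}(J,\overline{X}(A/F))$. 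The fact that $J$ is normal in $N_G(J)$ makes all of the constructions above $N_G(J)$-equivariant, so the resulting isomorphism $T^F(A/{F^J}) \cong \hat H^{-1}(J,\overline{X}(A/F))$ is automatically one of $\ZZ_p[N_G(J)]$-modules.
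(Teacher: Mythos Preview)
Your proposal is correct and follows essentially the same route as the paper: reduce the bijectivity of the norm map to the control statement $\Sel_p(A/{F^J}) \cong \Sel_p(A/F)^J$ via Greenberg's local analysis in \cite{RG}, then take $J$-coinvariants of the tautological exact sequence and identify $T^F(A/{F^J})$ with $(\overline{X}(A/F)_J)_{\tors} = \hat H^{-1}(J,\overline{X}(A/F))$ using that $\overline{X}(A/F)$ is a $\ZZ_p[J]$-lattice. The only point the paper makes more explicit is that Greenberg's hypotheses in \cite[Proposition~5.6]{RG} literally require good ordinary non-anomalous reduction at \emph{all} $p$-adic places, whereas here \ref{hyp_d} only imposes this at the ramified ones; the paper observes that the proof of that proposition together with \cite[Proposition~4.3]{RG} still yields the control isomorphism under the weaker hypothesis \ref{hyp_c}--\ref{hyp_d}, and you should flag this when you cite \cite{RG}.
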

\begin{proof}
We first note that, if we strengthen condition~\ref{hyp_d} to impose that the reduction is good ordinary and non-anomalous at all places above $p$, then our conditions
coincide precisely with the hypotheses of Proposition~5.6 used by Greenberg in~\cite{RG}. In this case the restriction homomorphism
$\res^{F^J}_F: \Sel_p(A/{F^J}) \to \Sel_p(A/{F})^J$ is therefore bijective.
However, the proof of Proposition~5.6 and the result in Proposition~4.3 in~\cite{RG} show that $\res^{F^J}_F$ is also still bijective if there are places above $p$
which do not ramify in $F/K$ and for which the reduction is good but not necessarily ordinary or non-anomalous and so the restriction map
$\Sel_p(A/{F^J}) \to \Sel_p(A/{F})^J$ is bijective under the given hypotheses. By taking Pontryagin duals this implies that the norm map
$X(A/{F})_J \to X(A/{F^J})$ is bijective and hence restricts on torsion subgroups to give an isomorphism
$X(A/{F})_{J,{\rm tor}} \cong X(A/{F^J})_{\rm tor} = T(A/{F^J})$.

Upon taking $J$-coinvariants of the tautological exact sequence
\begin{equation*}\label{taut} 0 \to T(A/{F}) \to X(A/F) \to \overline{X}(A/F)\to 0\end{equation*}
and then passing to torsion subgroups in the resulting exact sequence we therefore obtain an exact sequence of $\ZZ_p[N_G(J)]$-modules
\begin{equation*}\label{coinvariance}  T(A/F) \xrightarrow{\pi^F_{F^J}} T(A/{F^J}) \to \overline{X}(A/F)_{J,{\rm tor}} \to 0.\end{equation*}

Given this exact sequence and the definition of $T^F(A/{F^J})$ as the cokernel of $\pi^F_{F^J}$, the claimed isomorphism is a direct consequence of the fact that
$\overline{X}(A/F)_{J,{\rm tor}}$ is equal to $\hat H^{-1}\bigl(J,\overline{X}(A/F)\bigr)$. Indeed, since  $\hat H^{-1}\bigl(J,\overline{X}(A/F)\bigr)$ is finite and
$\overline{X}(A/F)$ is $\ZZ_p$-free, the latter equality follows immediately from the tautological exact sequence
\begin{equation}\label{taut seq} 0 \to \hat H^{-1}\bigl(J,\overline{X}(A/F)\bigr) \xrightarrow{\subseteq} \overline{X}(A/F)_{J} \to \overline{X}(A/F)^J \to \hat H^0\bigl(J,\overline{X}(A/F)\bigr)\to 0\end{equation}
where the third arrow is induced by the action of the element $\sum_{g \in J}g$ of $\ZZ_p[G]$.

\end{proof}

\begin{remark} If in the setting of Proposition \ref{sha tate} one has $\srk(A/F) = \srk(A/F^J)$, then $\overline{X}(A/F)$ is isomorphic as a $\ZZ_p[J]$-module to $\ZZ_p^{\srk(A/F)}$ and so the group $\hat H^{-1}\bigl(J,\overline{X}(A/F)\bigr)$ vanishes. Proposition \ref{sha tate} therefore implies that in this case the group $T^{F}(A/F^J)$ also vanishes and hence, a fortiori, that the order of $T(A/F)$ is divisible by the order of $T(A/{F^J})$. This shows that, under the stated hypotheses, Proposition \ref{sha tate} provides a useful quantitative measure of the general principle that any decrease in the order of Tate-Shafarevich groups must be accompanied by a corresponding increase in the rank of Mordell-Weil groups. For details of some explicit applications of this result see \S\ref{examples}.\end{remark}

\begin{remark} Since the Tate cohomology groups of $\overline{X}(A/F)$ are finite, for any subgroup $J$ of $P$ the isomorphism
$X(A/F)_J \cong X(A/{F^J})$ in Proposition \ref{sha tate} combines with the exact sequence~\eqref{taut seq} to imply that
\begin{equation}\label{rationalisom}\QQ_p\otimes X(A/{F^J})\cong\QQ_p\otimes X(A/F)^J\end{equation} and in particular that
\[\srk(A/{F^J})=\rk_{\ZZ_p}\bigl(\overline{X}(A/F)^J\bigr).\] In fact, these statements at the rational level hold true in full generality and do not depend on the arguments
or hypotheses required to prove the integral assertions of Proposition \ref{sha tate}. For convenience and brevity,
we shall often in the sequel use these facts without further explicit comment. \end{remark}

To end this section we prove a useful technical result concerning the hypotheses~\ref{hyp_a}-\ref{hyp_e}.

\begin{lemma}\label{sha tate2} Let $k', K'$ and $F'$ be fields with $k \subseteq k'\subseteq K'\subseteq F' \subseteq F$ and such that $F'/k'$ is Galois and $G_{F'/K'}$
is a Sylow $p$-subgroup of $G_{F'/k'}$.

\begin{enumerate}
\item[(i)] Assume that $A_{/k}$ and $F$ satisfy the hypotheses~\ref{hyp_a}, \ref{hyp_b}, \ref{hyp_e} and the second part of \ref{hyp_d} (with respect to $K$)
and that $A_{/k}$ has good reduction at all $p$-adic places and
ordinary reduction at all $p$-adic places that ramify in $F/k$. Then $A_{/k'}$ and $F'$ satisfy  the hypotheses \ref{hyp_a}-\ref{hyp_e} with respect to $K'$.

\item[(ii)] If $A_{/k}$ and $F$ satisfy the hypotheses~\ref{hyp_a}--\ref{hyp_e} (with respect to $K$) and $K \subseteq K'$, then $A_{/k'}$ and $F'$ satisfy the
hypotheses~\ref{hyp_a}--\ref{hyp_e} with respect to $K'$.
\end{enumerate}
\end{lemma}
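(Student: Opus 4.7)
My plan is to verify each of the hypotheses~\ref{hyp_a}--\ref{hyp_e} for $A_{/k'}$ and $F'$ with respect to $K'$ in turn, leveraging two main tools: the structural fact that $[K':k']$ is coprime to $p$ while $[F':K']$ is a power of $p$, and the standard $p$-group argument that any non-zero finite abelian $p$-group carrying an action of a $p$-group has non-zero fixed points.

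For Part (ii), the assumption $K \subseteq K'$ forces $K' = F^{H'}$ for a subgroup $H' \leq P$, so $K'/K$ is a $p$-extension. Hypothesis~\ref{hyp_a} follows by first deducing $A(F)[p] = 0$ from $A(F)[p]^P = A(K)[p] = 0$ via the $p$-group argument; \ref{hyp_c} is automatic from base-change invariance of good reduction; and \ref{hyp_e} is immediate because if $u'$ is a bad place of $A_{/k'}$ then the place $u$ of $k$ below is also bad, hence unramified in $F/k$ by \ref{hyp_e}, and so $F'/k'$ is unramified at $u'$. Both \ref{hyp_d} and \ref{hyp_b} exploit the $p$-group argument at the level of residue fields: for \ref{hyp_d}, ramification of $F'/K'$ at a $p$-adic place $w'$ of $K'$ forces ramification of $F/K$ at the image $v$ of $w'$ in $K$ (by multiplicativity of ramification indices along the tower), whereupon \ref{hyp_d} applied to $v$ yields ordinary reduction and $A(\kappa_v)[p] = 0$, and the $p$-group argument applied to the cyclic $p$-group $\Gal(\kappa_{w'}/\kappa_v)$ acting on $A(\kappa_{w'})[p]$ gives the desired vanishing. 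For \ref{hyp_b}, at a bad (necessarily non-$p$-adic) place $\sigma$ of $K'$, \ref{hyp_e} makes $F/K$ unramified at $v := \sigma|_K$, so $[\kappa_\sigma:\kappa_v]$ is a $p$-power, and applying the $p$-group argument to the $p$-primary part of the component group $\Phi_v(\kappa_\sigma)[p^\infty]$ reduces it to the corresponding group at $v$, which vanishes by \ref{hyp_b}.

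For Part (i), my strategy is to reduce to Part (ii). First observe that the hypotheses of Part (i) already imply the full hypotheses~\ref{hyp_a}--\ref{hyp_e} of \S\ref{Preliminaries} for $A_{/k}$ and $F$ with respect to $K$: hypothesis~\ref{hyp_c} follows from good reduction of $A_{/k}$ at all $p$-adic places, and \ref{hyp_d} follows by combining the assumed second part of \ref{hyp_d} with the ordinariness hypothesis over $k$ (noting that any place of $K$ ramified in $F/K$ lies above a place of $k$ ramified in $F/k$). Then introduce the intermediate field $\tilde K := F^{\tilde P}$ with $\tilde P := P \cap G_{F/k'}$; standard Sylow theory shows $\tilde P$ is a Sylow $p$-subgroup of $G_{F/k'}$ and $K \subseteq \tilde K$, so Part (ii) applied to $(k,K,F)$ with $(k',\tilde K, F)$ in the role of $(k',K',F')$ yields \ref{hyp_a}--\ref{hyp_e} for $A_{/k'}$ and $F$ with respect to $\tilde K$. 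Finally, I would descend from $F$ to $F'$: a short group-theoretic computation identifies $K' = \tilde K \cap F'$ and shows that $[\tilde K : K']$ is coprime to $p$, so each place of $K'$ lifts to a place of $\tilde K$ via a residue extension of prime-to-$p$ degree, allowing the $\tilde K$-version of each hypothesis to transfer to $K'$ through inclusion of residue fields.

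The main obstacle, and the delicate point in the final descent step, is verifying that ramification of $F'/K'$ at a $p$-adic place $w'$ of $K'$ still forces ramification of $F/\tilde K$ at a lifted place $\tilde w$, so that \ref{hyp_d} over $\tilde K$ can be invoked. This will require a $p$-part bookkeeping on ramification indices: since $[\tilde K:K']$ is coprime to $p$ and $e_{F'/K'}(w')$ is a power of $p$ strictly greater than $1$, the multiplicativity $e_{F/K'} = e_{F/\tilde K} \cdot e_{\tilde K/K'}$ combined with $e_{F/K'} \geq e_{F'/K'}$ forces $p \mid e_{F/\tilde K}$. Once this is in hand, ordinariness and the vanishing of $A(\kappa_{\tilde w})[p]$ provided by \ref{hyp_d} at $\tilde K$ descend to $w'$ without further difficulty.
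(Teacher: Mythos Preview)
Your argument for Part (ii) is correct and coincides with the paper's: both verify each hypothesis by passing up to the top field $F$ and invoking the fixed-point lemma for a $p$-group acting on a finite abelian $p$-group.

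For Part (i), however, your reduction to Part (ii) contains an error. You set $\tilde P := P \cap G_{F/k'}$ and claim that standard Sylow theory makes this a Sylow $p$-subgroup of $G_{F/k'}$; but for an arbitrary subgroup $H \leq G$ the intersection $P \cap H$ need not be Sylow in $H$ (take $G = S_3$, $p=2$, $P = \langle(12)\rangle$, $H = G_{F/k'} = \langle(13)\rangle$, so $P \cap H = 1$). The claim would hold if $G_{F/k'}$ or $P$ were normal in $G$, but neither is assumed. Your strategy can be salvaged by instead taking $\tilde P$ to be a Sylow $p$-subgroup of $G_{F/K'}$ --- which is automatically Sylow in $G_{F/k'}$ since $[K':k']$ is prime to $p$ --- and then replacing $P$ by a $G$-conjugate containing $\tilde P$; this is harmless because the hypotheses \ref{hyp_a}--\ref{hyp_e} are invariant under replacing $K$ by a conjugate. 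With this corrected choice one has $K' \subseteq \tilde K$ directly and the rest of your descent goes through, though note that hypothesis \ref{hyp_c} for $K'$ does not follow by ``inclusion of residue fields'' from $\tilde K$ (good reduction need not descend along a possibly ramified extension) but rather comes straight from the assumption that $A_{/k}$ has good reduction at all $p$-adic places. The paper avoids this whole detour: its verifications of \ref{hyp_a}, \ref{hyp_b}, \ref{hyp_e}, and the second part of \ref{hyp_d} work uniformly for both (i) and (ii) by lifting to a place of $F$ and using only that $[F:K]$ is a $p$-power and $[K:k]$ is prime to $p$, with no comparison between $K$ and $K'$ required; the remaining hypotheses then follow in (i) directly from the stated reduction behaviour of $A_{/k}$.
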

\begin{proof} We note first that the validity of hypotheses \ref{hyp_a} and \ref{hyp_e} for $A_{/k}$ and $F$ implies their validity for $A_{/k'}$ and $F'$. This is clear
for \ref{hyp_e} and for \ref{hyp_a} follows from the fact that $F/K$ is a Galois extension of $p$-power degree and so the vanishing of $A(K)[p]$ implies the vanishing of
$A(F)[p]$ and hence also that of $A(K')[p]$.

To consider hypothesis \ref{hyp_b} we fix a place $v'$ of bad reduction for $A_{/K'}$ and a place $w$ of $F$ above $v'$ and write $v$ for the place of $K$ which lies
beneath $w$. Then hypothesis \ref{hyp_e} implies that $v$ and $v'$ are unramified in $F/K$ and $F/K'$ respectively. Thus, if the Tamagawa number of $A_{/K}$ at $v$ is
coprime to $p$, then so is that of $A_{/F}$ at $w$ since $F/K$ is a Galois extension of $p$-power degree, and hence also that of $A_{/K'}$ at $v'$.

In a similar way one can show that the second part of hypothesis \ref{hyp_d} for $A_{/K}$ and $F$ implies the analogous assertion for $A_{/K'}$ and $F'$. Indeed, if $v'$
is any $p$-adic place of $K'$ which ramifies in $F'$ and $w$ any place of $F$ above $v'$, then the restriction $v$ of $w$ to $K$ is ramified in $F/K$ and hence (by
assumption) $A(\kappa_v)[p]$ vanishes. This implies that $A(\kappa_w)[p]$ vanishes (since $F/K$ is a Galois extension of $p$-power degree) and hence also that
$A(\kappa_{v'})[p]$ vanishes, as required.

In particular, since the validity of hypothesis \ref{hyp_c} and the first part of hypothesis \ref{hyp_d} for $A_{/K'}$ and $F'$ follow directly from the reduction
assumptions that are made in claim (i), that result is now clear.

To discuss claim (ii) we assume that $A_{/k}$ and $F$ satisfy the hypotheses~\ref{hyp_a}--\ref{hyp_e}. We also replace $F'$ by its Galois closure over $K$ (if necessary)
in order to assume, without loss of generality, that $F'/K$ is a Galois extension. Then it is easy to see that the hypotheses~\ref{hyp_c}, \ref{hyp_e} and the first part
of~\ref{hyp_d} will hold for $A_{/k'}$ and $F'$. In addition, since $F'/K$ is a Galois $p$-extension, the same arguments as used to prove claim (i) show that the
hypotheses ~\ref{hyp_a}, \ref{hyp_b} and the second part of~\ref{hyp_d} will also hold for $A_{/K'}$ and $F'$. This proves claim~(ii).\end{proof}

\subsection{The proofs of Theorem \ref{yakovlev} and Corollary \ref{general yak}}\label{proof yakovlev} In this section we make precise and prove all of the claims in Theorem \ref{yakovlev} and Corollary \ref{general yak}.

\subsubsection{The proof of Theorem \ref{yakovlev}} We use the notation and hypotheses of Theorem~\ref{yakovlev}. We set $\overline{X} := \overline{X}(A/F)$ and
for each integer $i$ with $0 \le i \le n$ also $P_i := G_{F/F_i}$, $N_i := N_G(P_i)$ and $\sha_i := T^F(A/{F_{i}})$.

For each such integer $i$, one can check that the natural restriction $\hat H^{-1}\bigl(P_i,\overline{X}\bigr) \to \hat H^{-1}\bigl(P_{i+1},\overline{X}\bigr)$ and
corestriction maps  $\hat H^{-1}\bigl(P_{i+1},\overline{X}\bigr) \to \hat H^{-1}\bigl(P_{i},\overline{X}\bigr)$ on Tate cohomology correspond under
Proposition~\ref{sha tate} to the homomorphisms $\sha_i \to \sha_{i+1}$ and $\sha_{i+1} \to \sha_i$ that are induced by the natural restriction and corestriction
homomorphisms on Galois cohomology.

Given this, the main result of Yakovlev in~\cite{yakovlev} implies that the diagram~\eqref{yak diag} determines the isomorphism class of the $\ZZ_p[G]$-module
$\overline{X}$ up to addition of trivial source $\ZZ_p[G]$-modules with vertices contained in $P$. To be more precise, we recall that~\cite[Theorem 2.4 and Lemma 5.2]{yakovlev}
combine to give the following result:

\begin{lemma}\label{yako}
Assume that $M$ and $M'$ are any finitely generated $\ZZ_p[G]$-modules for which, for each $i$ with $0 \le i < n$, there are isomorphisms of
$\ZZ_p[N_i]$-modules $\kappa_i:\hat H^{-1}(P_{i},M) \to \hat H^{-1}(P_{i},M')$ that lie in commutative diagrams

\begin{equation*}\label{diagrams}\xymatrix{
  \hat H^{-1}(P_{i},M) \ar[r] \ar[d]_{\kappa_i} &  \hat H^{-1}(P_{i+1},M)\ar[d]^{\kappa_{i+1}} \\
 \hat H^{-1}(P_{i},M') \ar[r]&\hat H^{-1}(P_{i+1},M') }\xymatrix{\hat H^{-1}(P_{i},M)\ar[d]_{\kappa_i} & \ar[l] \hat H^{-1}(P_{i+1},M) \ar[d]^{\kappa_{i+1}}\\\hat H^{-1}(P_{i},M') & \ar[l] \hat H^{-1}(P_{i+1},M')}\end{equation*}
where the horizontal arrows are the natural restriction and corestriction homomorphisms. Then there are direct sum decompositions of $\ZZ_p[G]$-modules
\begin{equation}\label{yakovlev decomp}
 M = M_1 \oplus M_2, \qquad\text{ and }\qquad M' = M'_1\oplus M'_2
 \end{equation}
where $M_1$ and $M'_1$ are isomorphic and $M_2$ and $M'_2$ are both trivial source $\ZZ_p[G]$-modules with vertices contained in $P$.
In particular, if $\hat H^{-1}(P_{i},M) =  0 $ for each $i$, then $M$ is itself a
trivial source $\ZZ_p[G]$-module with vertices contained in $P$.\end{lemma}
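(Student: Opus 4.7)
The plan is to deduce the result directly from Yakovlev's classification of finitely generated $\ZZ_p[P]$-modules for cyclic $P$ of $p$-power order, and then to upgrade the conclusion from $\ZZ_p[P]$-modules to $\ZZ_p[G]$-modules by invoking the extra $N_i$-equivariance built into the hypotheses. Concretely, \cite[Theorem 2.4]{yakovlev} asserts that the isomorphism class of such an $M$ is determined, modulo the addition of summands of the form $\ZZ_p[P/P_j]$ with $0\le j\le n$, by the `cohomology system' consisting of the Tate cohomology groups $\hat H^{-1}(P_i,M)$ for $0\le i<n$ together with the natural restriction and corestriction maps between them. (Strictly speaking this is a result about $\ZZ_p[P]$-lattices; in our intended application $M$ and $M'$ are $\ZZ_p$-free by construction, but even in general one may reduce to the lattice case by first splitting off the $\ZZ_p$-torsion, which contributes trivially to $\hat H^{-1}(P_i,-)$ after tensoring.)

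First I would observe that the data provided by the isomorphisms $\kappa_i$ together with the commutativity of the restriction and corestriction squares is precisely the datum of an isomorphism between the cohomology system attached to $M$ and that attached to $M'$. Applying \cite[Theorem 2.4]{yakovlev} would then yield, already at the level of $\ZZ_p[P]$-modules, a decomposition of the shape~\eqref{yakovlev decomp} in which $M_1$ and $M'_1$ are isomorphic as $\ZZ_p[P]$-modules and $M_2$ and $M'_2$ are direct sums of modules of the form $\ZZ_p[P/P_j]$. To promote this to a decomposition of $\ZZ_p[G]$-modules, I would then invoke \cite[Lemma 5.2]{yakovlev}, whose role is precisely to record that when the $\kappa_i$ are $N_i$-equivariant one may track the action of the various normalisers $N_i$ through Yakovlev's construction. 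The upshot is that the isomorphism $M_1\cong M'_1$ may be taken to be $\ZZ_p[G]$-linear and that each of $M_2$ and $M'_2$ may be taken to be a direct sum of summands of modules of the form $\ZZ_p[G/H]$ with $H\le P$, i.e.\ a trivial source $\ZZ_p[G]$-module with vertices contained in $P$ in the sense recalled in \S\ref{Statement of the main structure results}.

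The second assertion of the lemma would then be deduced by applying the first assertion with $M':=0$: in this case the zero maps $\kappa_i:0\to 0$ automatically satisfy the equivariance and diagram-commuting hypotheses, so the decomposition forces $M_1=0$ and hence $M=M_2$ is itself a trivial source $\ZZ_p[G]$-module with vertices contained in $P$. The main obstacle is that the essential technical content of the argument — that the system of Tate cohomology groups together with the restriction/corestriction maps suffices to pin down a $\ZZ_p[P]$-lattice up to permutation summands, and that this correspondence is compatible with the outer action of $G/P$ — is highly non-trivial and must be imported wholesale from \cite{yakovlev}; what remains to be checked internally to the present proof is only the verification that the hypotheses made here match up precisely with the input required by \cite[Theorem~2.4]{yakovlev} and \cite[Lemma~5.2]{yakovlev}, which is a bookkeeping exercise.
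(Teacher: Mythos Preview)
Your proposal is correct and matches the paper's approach exactly: the paper does not give an independent proof of this lemma but simply records it as the combination of \cite[Theorem~2.4 and Lemma~5.2]{yakovlev}, and your sketch spells out precisely how these two results combine and how the final assertion follows by taking $M'=0$.
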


By simply applying the final assertion of Lemma \ref{yako} to the module $M = \overline{X}$ we complete the proof of Theorem~\ref{yakovlev} (ii).

At this stage, to complete the proof of Theorem~\ref{yakovlev} (i) it suffices to assume that there is a decomposition of $\ZZ_p[G]$-modules
$\overline{X} = M_1\oplus M_2$ where $M_1$ is uniquely determined up to isomorphism and $M_2$ is a trivial source $\ZZ_p[G]$-module, and then to show that
$M_2$ is determined up to isomorphism as a $\ZZ_p[P]$-module by the value of $\srk(A/{F_i})$ for each $i$ with $0 \le i \le n$.
But, it is easy to see that any trivial source $\ZZ_p[P]$-module $W$ is determined uniquely up to isomorphism by the integers $\rk_{\ZZ_p}(W^{P_i})$ for
each $i$ with $0 \le i \le n$, and so the required fact follows directly from the equalities
\[
 \rk_{\ZZ_p}\Bigl(M_2^{P_i}\Bigr) = \rk_{\ZZ_p}\Bigl(\overline{X}^{P_i}\Bigr) - \rk_{\ZZ_p}\Bigl(M_1^{P_i}\Bigr) = \srk(A/{F_i}) - \rk_{\ZZ_p}\Bigl(M_1^{P_i}\Bigr),
\]
where the second equality follows directly from ~\eqref{rationalisom}.

This completes the proof of Theorem~\ref{yakovlev}.

\begin{remark}\label{elder}
If $A$ and $F$ satisfy the hypotheses~\ref{hyp_a}--\ref{hyp_e}, then Lemma~\ref{sha tate2} implies that the result of Theorem~\ref{yakovlev} can be applied to a variety
of Galois extensions $F'/k'$, with $k \subseteq k'\subseteq F'\subseteq F$, for which $G_{F'/k'}$ has cyclic Sylow $p$-subgroups. The rank of $\overline{X}(A/L)$ and
the behaviour of the natural restriction and corestriction maps on $T(A/L)$ as $L$ varies over intermediate fields of $F/K$ has therefore a strong influence on the
explicit structure of the $\ZZ_p[G]$-module $\overline{X}(A/F)$.
Nevertheless, it is unlikely that this data alone will suffice to determine the structure of $\overline{X}(A/F)$ as a $\ZZ_p[G]$-module unless $P$ is cyclic. For example,
if $G=P$ is non-cyclic of order $p^2$, then there exist (non-isomorphic) abstract $\ZZ_p[G]$-lattices $M_1$ and $M_2$ such that for all subgroups $J'$ and
$J$ with $J'\le J \le G$ both $\rk_{\ZZ_p}(M_1^J) = \rk_{\ZZ_p}(M_2^J)$ and the groups $\hat H^{-1}(J/J',M_1^{J'})$ and $\hat H^{-1}(J/J',M_2^{J'})$ vanish and yet only
$M_1$ is a trivial source $\ZZ_p[G]$-module. (We are grateful to Griff Elder \cite{GE} for pointing out to us the existence of such examples).
\end{remark}

\subsubsection{The proof of Corollary \ref{general yak}} The key point here is that the given structure of $G$ implies that for any character $\rho$ in $\Ir(G)$ there
exists a subgroup $G_\rho$ of $G$ which contains $P$ and a linear character $\rho'$ of $G_\rho$ such that
$\rho = \Ind_{G_\rho}^G(\rho')$ (for a proof of this fact see \cite[II-22, Exercice]{serre} and the argument of \cite[II-18]{serre}).

We set $k_\rho := F^{G_\rho}$, $F_\rho:= F^{\ker(\rho')}$ and $K_\rho := F^J$ where $J$ contains $\ker(\rho')$ and is such that $J/\ker(\rho')$ is the Sylow
$p$-subgroup of $G_\rho/\ker(\rho')$. We also set $G_\rho' := G_\rho/\ker(\rho')$ and $P_\rho' := J/\ker(\rho')$. Then $k_\rho \subseteq F^P =K$ and hence
also $K_\rho \subseteq K$ since the degree of $K_\rho/k_\rho$ is prime to $p$, the extension $F_\rho/k_\rho$ is cyclic and the multiplicity with which $\rho$ occurs in
the $\QQ_p^c[G]$-module $\QQ_p^c\otimes X(A/F)$ is equal to the multiplicity
$m_{\rho'}$ with which $\rho'$ occurs in the $\QQ_p^c[G'_\rho]$-module $\QQ_p^c\otimes X(A/{F_\rho})$.

In addition, the result of Lemma \ref{sha tate2}(i) combines with the given hypotheses on $A$ and $F$ to imply that the pair $A_{/k_\rho}$ and $F_\rho$ satisfy the
hypotheses~\ref{hyp_a}--\ref{hyp_e}. From the proof of Theorem~\ref{yakovlev} we therefore know that knowledge of the diagram~\eqref{yak diag} with $F/k$ replaced by
$F_\rho/k_\rho$
determines the isomorphism class of the $\mathbb{Z}_p[G'_\rho]$-module $\overline{X}(A/{F_\rho})$ up to decompositions of the form~\eqref{yakovlev decomp}.
Such decompositions, with $M = \overline{X}(A/{F_\rho})$ say, determine $m_{\rho'}$ up to the multiplicity $m'_{\rho'}$ with which $\rho'$ occurs in the
scalar extension $\QQ_p^c\otimes M_2$ of the trivial source
$\mathbb{Z}_p[G'_\rho]$-module $M_2$.
We also write $m'_{\rho''}$ for the multiplicity with which the character $\rho''\in \Ir(P'_\rho)$ induced by the restriction of $\rho'$ to $J$
occurs in the $\mathbb{Q}_p^c[P'_\rho]$-module $\QQ_p^c\otimes M_2$. Then,
since $M_2$ is also a trivial source $\mathbb{Z}_p[P'_\rho]$-module and $P'_\rho$ is a $p$-group, the explicit structure of such modules
(and in particular the fact that $\ZZ_p[P'_\rho/H]$ is indecomposable for any subgroup $H$ of $P'_\rho$)
implies that $m'_{\rho'} \le m'_{\rho''} \le m'_{\eins_{P'_\rho}}$. Since
\[\QQ_p\otimes M_2^{P'_\rho} \subseteq \QQ_p\otimes X(A/{F_\rho})^{P'_\rho} \cong \QQ_p\otimes X(A/{K_\rho}) \subseteq \QQ_p\otimes X(A/{K})\] (with the
isomorphism a consequence of ~\eqref{rationalisom}), one therefore has
\[ m'_{\rho'} \le m'_{\eins_{P'_\rho}} = \dim_{\mathbb{Q}_p}\bigl(\QQ_p\otimes M_2^{P'_\rho}\bigr) \le \dim_{\mathbb{Q}_p}\bigl(\QQ_p\otimes X(A/{K})\bigr) = \srk(A/{K}).\]
This proves the first assertion of Corollary \ref{general yak}.

We next note that if $\Sel_p(A/K)$ is finite, then $\srk(A/{K})=0$ and so the above argument shows that the multiplicity $m_\rho$ with which any $\rho$
in $\Ir(G)$ occurs in the $\QQ_p^c[G]$-module $\QQ_p^c\otimes X(A/F)$ is uniquely determined  by a
  suitable diagram of the form ~\eqref{yak diag}. This implies the second assertion in Corollary \ref{general yak} because the $\QQ_p[G]$-module structure of
  $\QQ_p\otimes X(A/F)$ is determined up to isomorphism by the multiplicities $m_\rho$ for all $\rho$ in $\Ir(G)$. This completes the proof of Corollary
  \ref{general yak}.

\subsection{The proof of Theorem \ref{upper bound}}\label{proof of upper bound}

For each $\rho$ in $\Ir(P)$ we set $F_\rho := F^{\ker(\rho)}$. Then for each subgroup $H$ with $\ker(\rho)< H \le P$ one has
$\pi^F_{F^H} = \pi^{F_\rho}_{F^H}\circ \pi^F_{F_\rho}$ and so the module $T^{F_\rho}(A/{F^H})$ is a quotient of $T^F(A/{F^H})$. Thus if $T^F(A/{F^H})$ vanishes
for all $H\leq P$ (as assumed in claim (iii)), then $T^{F_\rho}(A/{F^H})$ vanishes for any given $\rho$ and $H$ as above (as assumed in claim (ii)).

Since the $\QQ_p^c[P]$-module $\QQ_p^c\otimes X(A/F)$ is isomorphic to $\bigoplus_{\rho \in \Ir(P)}V_\rho^{m_\rho}$ the  assertion of claim (iii) is therefore an immediate consequence of claim (ii). Indeed, if one has $m_\rho \le \rho(1)\cdot\srk(A/K)$ for all $\rho \in \Ir(P)$ (as would follow from claim (ii) under the hypotheses of claim (iii)), then
\[ \srk(A/F) = \sum_{\rho \in \Ir(P)}m_\rho\cdot\rho(1) \le \sum_{\rho \in \Ir(P)}\rho(1)^2\cdot\srk(A/K) = |P|\cdot \srk(A/K),\]
as required.

It therefore suffices to prove claims (i) and (ii) of Theorem \ref{upper bound} and to do this we argue by induction on $|P|$, using the result of
Lemma \ref{sha tate2}(ii).

To prove claim (i) by induction on $n$, it suffices to show that
\begin{multline*}
 \srk(A/{F_{n-1}}) + (p-1)\dim_{\mathbb{F}_p}\bigl(T^{F_n}(A/{F_{n-1}})\bigr) \le \srk(A/{F_n})\\
 \le p\cdot\srk(A/{F_{n-1}}) + (p-1)\dim_{\mathbb{F}_p}\bigl(T^{F_n}(A/{F_{n-1}})\bigr).
\end{multline*}
To do this we set $H := G_{F_n/F_{n-1}}$ and note that (as per the discussion in \S\ref{n=1} below) any $\ZZ_p[H]$-lattice is isomorphic to a direct sum of the form $N:= \ZZ_p^a\oplus \ZZ_p[H]^b \oplus R^c$ for suitable non-negative integers $a$, $b$ and $c$ and with $R:= \ZZ_p[H]/\bigl(\sum_{h\in H}h\bigr)$. This implies that $\rk(N^H) = a + b$ and $\rk(N) = a + pb + (p-1)c$ so $\rk(N^H) + (p-1)c \le \rk(N)$, with equality if and only if $b=0$, and $\rk(N) \le p\cdot \rk(N^H) + (p-1)c$, with equality if and only if $a=0$.
In addition, an easy computation shows that $\hat H^{-1}(H,R)\cong \mathbb{F}_p$ and so the group $\hat H^{-1}(H,N) \cong \hat H^{-1}(H,R)^c$ is an $\mathbb{F}_p$-space of dimension $c$. Given these observations for $N = \overline{X}(A/F)$ and the result of Lemma~\ref{sha tate2}(ii), the above displayed inequalities follow by applying Proposition~\ref{sha tate} with $J = H$.

Turning to the proof of claim (ii) we write $\overline{\rho}$ for the character of $P/\ker(\rho)$ that inflates to give $\rho$. Then, since $m_{F^{\ker(\rho)},\overline{\rho}} = m_{F,\rho}$, Lemma~\ref{sha tate2} allows us to replace $F$ by $F^{\ker(\rho)}$ and thus assume that $T^{F}(A/L) =  0 $ for all $L$ with $K \subseteq L\subsetneq F$. This is what we do in the rest of this argument.

We next note that $P$ is monomial and hence that for each $\rho$ in $\Ir(P)$ there is a non-trivial subgroup $J$ of $P$, a cyclic quotient $Q= J/J'$ of $J$ and a character $\psi$ in $\Ir(Q)$ with $\rho = \Ind_J^P\circ \Inf_{Q}^J(\psi)$. We set $F' := F^{J'}$ and $K' := F^J$.
Then $m_{F,\rho} = m_{F',\psi}$ and Lemma~\ref{sha tate2}(ii) implies that $A_{/k}$ and $F'$ satisfy hypotheses \ref{hyp_a}--\ref{hyp_e} with respect to $K'$. In
addition, for each $L$ with $K'\subseteq L\subsetneq F'$ the module $T^{F'}(A/{K'})$ is a quotient of $T^F(A/{K'})$ and so vanishes under our present hypotheses.
Since $Q = G_{F'/K'}$ is cyclic we may therefore apply Theorem~\ref{yakovlev} (ii) to $F'/K'$ in order to deduce that $\overline{X}(A/{F'})$ is a trivial source $\ZZ_p[Q]$-module.
From the explicit structure of a trivial source $\ZZ_p[Q]$-module it is then clear that $m_{F,\rho} = m_{F',\psi} \le m_{F',\eins_{Q}} = \srk(A/{K'})$. Since $\rho(1)$ is equal to $[P:J] = [K':K]$ it thus suffices to prove that $\srk(A/{K'}) \le [K':K]\cdot\srk(A/K)$.

To prove this we use the fact that, as $P$ is a $p$-group, there exists a finite chain of subgroups
\begin{equation}\label{chain}
 J = J_0 \trianglelefteq J_1 \trianglelefteq \cdots \trianglelefteq J_n = P
\end{equation}
in which $|J_{i+1}/J_i| < |P|$ for all $i$ with $0\le i < n$. For each such $i$ we set $F^i := F^{J_i}$. Then our hypotheses combine with Lemma~\ref{sha tate2}(ii) to imply that, for each $i$, $A_{/F^{i+1}}$ and $F^{i}$ satisfy the hypotheses of Theorem~\ref{upper bound}(iii) and hence, by induction, that $\srk(A/{F^{i}}) \le |J_{i+1}/J_{i}|\cdot \srk(A/{F^{i+1}}) = [F^{i}:F^{i+1}]\cdot \srk(A/{F^{i+1}}).$ It follows that
\begin{multline*} \srk(A/{K'}) = \srk(A/{F^{0}}) \le \biggl(\prod_{i=0}^{n-1}|J_{i+1}/J_{i}|\biggr) \cdot\srk(A/{F^{n}})\\ = [J_n:J_0]\cdot \srk(A/K) = [K':K]\cdot\srk(A/K)\end{multline*}
as required. This completes the proof of Theorem~\ref{upper bound}.

\subsection{The proof of Theorem \ref{noether}}\label{proof of noether}
 From (the proof of) Theorem~\ref{upper bound}(iii) we know that the conditions in Theorem~\ref{noether}(ii) and (iii) are equivalent. Next we note that if
 $\overline{X}(A/F)$ is a projective $\ZZ_p[G]$-module, then it is a free $\ZZ_p[P]$-module (by Swan's Theorem~\cite[(32.1)]{curtisr}), and hence also a free
 $\ZZ_p[H]$-module for each $H \le P$.
It follows that $\QQ_p\otimes X(A/F)$ is a free $\QQ_p[P]$-module and hence that both $\srk(A/F) = |H|\cdot \srk(A/{F^H})$ for each $H \le P$ and
$m_\rho = \rho(1)\cdot \srk(A/K)$ for each $\rho \in \Ir(P)$. In addition, the projectivity of $\overline{X}(A/F)$ implies that it is also cohomologically
trivial, and so Proposition~\ref{sha tate} implies that $T^{F}(A/{F^H}) =  0 $ for all $H \le P$.
 At this stage we have verified the implications~ (i) $\Longrightarrow$ (ii) $\Longleftrightarrow$ (iii) $\Longrightarrow$ (iv) $\Longrightarrow$ (v) in
 Theorem~\ref{noether} and so it suffices to prove (v) $\Longrightarrow$ (i).

To prove this implication we note that the $\ZZ_p[G]$-module $\overline{X}(A/F)$ is both finitely generated and $\ZZ_p$-free and hence
that~\cite[Chapter VI, (8.7), (8.8) and (8.10)]{brown} combine to imply that it is a projective $\ZZ_p[G]$-module if it is a cohomologically trivial $\ZZ_p[P]$-module.
We next claim that it is a cohomologically trivial $\ZZ_p[P]$-module if it is a free $\ZZ_p[C]$-module for each $C \le P$ with $|C| = p$.
The point here is that for any non-trivial subgroup $J$ of $P$ there exists a subgroup $C$ of $P$ which is normal in $J$ and has order $p$ and hence also a
Hochschild-Serre spectral sequence in Tate cohomology
$\hat H^{a}\bigl(J/C,\hat H^b(C,\overline{X}(A/F))\bigr) \Longrightarrow \hat H^{a+b}\bigl(J,\overline{X}(A/F)\bigr)$: thus if $\overline{X}(A/F)$ is
a free $\ZZ_p[C]$-module, then $\hat H^{m}\bigl(J,\overline{X}(A/F)\bigr)=  0 $ for all integers $m$, as required.
It therefore suffices to prove that the hypotheses in claim (v) imply that $\overline{X}(A/F)$ is a free $\ZZ_p[C]$-module for each subgroup $C$ of $P$ of order
$p$. Now, under the given conditions, the hypotheses of Theorem~\ref{yakovlev} (ii) are satisfied by the data $A_{/F^C}, F/F^C$ and so we know that
$\overline{X}(A/F)$ is a trivial source $\ZZ_p[C]$-module, and hence of the form $\ZZ_p[C]^m\oplus \ZZ_p^n$ for suitable non-negative integers $m$ and $n$.
It follows that $\srk(A/F) = p\,m+n$ and, using ~\eqref{rationalisom}, that $\srk(A/{F^C}) = m + n$ and so the equality $\srk(A/F) = p\cdot \srk(A/{F^C})$ in claim (v)
implies $n = 0$, and hence that $\overline{X}(A/F) = \ZZ_p[C]^m$ is a free $\ZZ_p[C]$-module, as required.

This completes the proof of Theorem~\ref{noether}.

\subsection{The proof of Corollary \ref{noether cor}}\label{proof of noether cor}
We first note that, under the assumed validity of hypotheses~\ref{hyp_a}--\ref{hyp_e}, Proposition~\ref{sha tate} shows that for every subgroup $J$ of $P$
the natural homomorphism $X(A/F)_J \to X(A/{F^J})$ is bijective.

In particular, if we now assume that $\overline{X}(A/F)$ is a projective $\ZZ_p[G]$-module, then there is a direct sum decomposition of $\ZZ_p[G]$-modules
$X(A/F) \cong T(A/F) \oplus \overline{X}(A/F)$ and $\overline{X}(A/F)_J$ is $\ZZ_p$-free and so the above isomorphism induces an isomorphism
$T(A/{F})_J \cong T(A/{F^J})$. Now, under the assumed vanishing of $\sha(A/K)_p$ the module $T(A/K) \cong T(A/{F})_P$ vanishes and so Nakayama's Lemma implies
$T(A/{F})$, and hence also $T(A/{F^J})$ for each $J \le P$, vanishes. Taken in conjunction with the proof of Theorem~\ref{noether} given above, this verifies the
implications~ (iv) $\Longleftrightarrow$ (v) $\Longleftrightarrow$ (i) $\Longrightarrow$ (ii) $\Longleftrightarrow$ (iii) in Corollary~\ref{noether cor} and hence reduces
us to proving that~ (ii) $\Longrightarrow$ (v) under the stated hypotheses.

To prove this implication it suffices to prove that, under the hypotheses of Corollary~\ref{noether cor}, if $\QQ_p\otimes X(A/F)$ is a free $\QQ_p[P]$-module, then
$T(A/{F})$ vanishes and $X(A/F)$ is a cohomologically trivial $\ZZ_p[C]$-module for each subgroup $C$ of $P$ of order $p$.  To prove this we use induction on
$|P|$. We thus fix a subgroup $C$ of $P$ of order $p$ and then choose a chain of subgroups as in~\eqref{chain} but with $J$ replaced by $C$. Then Lemma~\ref{sha tate2}
shows that each set of data $A_{/F^{i+1}}$ and $F^i$ satisfies~\ref{hyp_a}--\ref{hyp_e} and hence, by induction, we can deduce that $T(A/{F^C}) =  0 $. By
Proposition~\ref{sha tate}, we have that $\hat H^{-1}\bigl(C,\overline{X}(A/F)\bigr) =  0 $.
Since $\QQ_p\otimes X(A/F)$ is by assumption a free $\QQ_p[C]$-module, a Herbrand quotient argument then implies that
$\hat H^{0}\bigl(C,\overline{X}(A/F)\bigr) =  0 $ and hence that $\overline{X}(A/F)$ is a cohomologically trivial $\ZZ_p[C]$-module (since, for example, the
Tate cohomology of $C$ is periodic of order two).
By~\cite[Chapter VI, (8.7)]{brown} we may now deduce that $\overline{X}(A/F)$ is a projective $\ZZ_p[C]$-module and then the same argument as used at the beginning
of this section shows that $T(A/{F^C}) =  0 $ implies $T(A/{F}) =  0 $. This completes the proof of Corollary~\ref{noether cor}.

\subsection{The proofs of Corollaries \ref{mr cor} and \ref{ftct cor}}\label{last}

To complete the proof of all of the results stated in~\S\ref{mwts} it now only remains to prove Corollaries \ref{mr cor} and \ref{ftct cor}.

\subsubsection{The proof of Corollary \ref{mr cor}} The given hypotheses imply that the main results of Mazur and Rubin in~\cite{mr2} are valid.
Their Theorem~B thus shows that the $\QQ_p[P]$-module $\QQ_p\otimes X(A/F)$ has a direct summand that is isomorphic to $\QQ_p [P]$. Given this fact, claim (ii) follows
immediately from the equivalence of claims (i) and (iii) in Corollary~\ref{noether cor}, whilst claim (i) is a straightforward consequence of
Theorem~\ref{yakovlev} (ii) and the fact that if $P$ is cyclic then any trivial source $\ZZ_p[P]$-module $M$ for which the associated $\QQ_p[P]$-module $\QQ_p\otimes M$ has a free
rank one direct summand must itself have a direct summand that is isomorphic to $\ZZ_p[P]$.

\subsubsection{The proof of Corollary \ref{ftct cor}}

The first thing to note is that, since the extension $\QQ(\zeta_{p^n})/\QQ(\zeta_p)$ is a $p$-extension, the given hypotheses imply (via Lemma \ref{sha tate2}) that the abelian variety
$A$ satisfies the hypotheses~\ref{hyp_a}--\ref{hyp_e} with $F = K_d^{n_d,n}, K = \QQ(\zeta_{p^n})$ and $k$ equal to the subextension of $\QQ(\zeta_{p^n})$ of degree $p^{n-1}$ over $\QQ$
(so that $G = G_{n}$ and $P = P_{n}$).

Given this, claim (i) follows directly from Theorem \ref{yakovlev} (ii). In order to prove claim (ii), we first note that any non-linear character $\rho$ of
$G$ is of the form $\Ind^G_P(\psi)$ for a linear character $\psi$ of $P$. From the explicit structure of a trivial source module over $\ZZ_p[P]$ it is then clear that
$m_\rho=m_\psi\leq m_{\eins_{P}} = \srk\bigl(A/{\QQ(\zeta_{p^n})}\bigr)$. It is also clear that, if $\rho'=\Ind^G_P(\psi')$ for a linear character
$\psi'$ of $P$, then $m_\psi\leq m_{\psi'}$ if and only if $\ker(\psi)\subseteq \ker(\psi')$.

Similarly, by using the fact that $\overline{X}(A/{K_d^{n_d,n}})$ is a trivial source module in the setting of claim (iii), one finds that
$m_\rho\leq \rho(1)\,m_\phi$ for any $\rho$ in $\Ir(G)$ whose restriction to $H$ has $\phi$ occurring with non-zero multiplicity. In this inequality $m_\phi$ denotes
the multiplicity with which $\phi$ occurs in the module $\QQ^c_p\otimes X\bigl(A/{\QQ(\zeta_{p^n})}\bigr)=\QQ^c_p\otimes X(A/{K_d^{n_d,n}})^P$. By arguing as in
the proof of Theorem \ref{upper bound}(iii) it then follows that
$\rk_{\ZZ_p}\bigl(e_\phi \cdot X(A/{K_d^{n_d,n}})\bigr) \le p^{n_d}\cdot \rk_{\ZZ_p}\bigl(e_\phi \cdot X(A/{\QQ(\zeta_{p^n})})\bigr)$, as required to complete the proof
of claim (iii).

\section{Examples}\label{examples} To end the article we discuss several concrete examples in which the classification results of Heller and Reiner in \cite{hr} can be combined with the results of \S\ref{ago} to make the result of Theorem \ref{yakovlev} much more explicit.

To do this we assume to be given a (possibly finite) pro-cyclic pro-$p$ extension of number fields $F/k$ and an abelian variety $A$ over $k$ which satisfies the hypotheses~\ref{hyp_a}--\ref{hyp_e} with respect to the field $K =k$. We set $G:= G_{F/k}$ and for each non-negative integer $i$ we write $F_i$ for the unique field with $k \subseteq F_i \subseteq F$ and $[F_i:k]$ equal to $p^i$ unless $p^i > |G|$ in which case we set $F_i = F$. For each such $i$ we then set $r_i := \srk(A/{F_{i}})$ and define an integer $t_i$ by the equality $p^{t_i} :=  |T^F\!(A/{F_{i}})|$. We also write $\iota(G)$ for the number of isomorphism classes of indecomposable $\ZZ_p[G]$-lattices.

For each non-negative integer $i$ we write $R_i$ for the quotient $\ZZ_p[\ZZ/(p^i)]/(T)$ where $T$ denotes the sum in $\ZZ_p[\ZZ/(p^i)]$ of all elements of $(p^{i-1})/(p^i)\subset \ZZ/(p^i)$. For any integer $i$ such that $G$ has a quotient $G_i$ of order $p^i$ we regard $R_i$ as a $\ZZ_p[G]$-module by means of the homomorphism $\ZZ_p[G]\to \ZZ_p[G_i]\cong \ZZ_p[\ZZ/(p^i)] \to R_i$ where the isomorphism is induced by choosing a generator of $G_i$ (the precise choice of which will not matter in what follows) and the unlabelled arrows are the natural projection maps.

\subsection{}\label{n=1}$|G|=p$. In this case $\iota(G) = 3$ (by \cite[Theorem 2.6]{hr}) with representative modules $\ZZ_p, \ZZ_p[G]$ and $R_1$. In addition, the groups $\hat H^{-1}(G,\ZZ_p)$ and $\hat H^{-1}(G,\ZZ_p[G])$ vanish and $\hat H^{-1}(G,R_1)$ has order $p$. In particular, if one sets $\delta:= (r_0-r_1)/(p-1)$, then a comparison of ranks and of Tate cohomology shows (via Proposition \ref{sha tate}) that $t_1\le \delta$ and that there is an isomorphism of $\ZZ_p[G]$-modules
\[ \overline{X}(A/F)\cong \ZZ_p^{r_1+t_1-\delta}\oplus  \ZZ_p[G]^{\delta-t_1} \oplus R_1^{t_1}.\]

 \subsection{}$|G| = p^2$. In this case $\iota(G) = 4p+1$ (by \cite[\S 4]{hr}) but one finds that the group $\hat H^{-1}(G,M)$ only vanishes for modules $M$ in an explicit subset $\Upsilon$ of $p+2$ of these isomorphism classes (see, for example, Table 2 in \cite{RVM}). Hence, if $\sha(A/k)_p$ vanishes, then $t_2=0$ and Proposition \ref{sha tate} implies $\overline{X}(A/F)$ is a direct sum of modules from $\Upsilon$. If one further assumes for example that $r_1=r_2$, then a comparison of ranks and Tate cohomology groups (over the subgroup of $G$ of order $p$) of the modules in $\Upsilon$ shows the existence of an integer $s_1$ with both $(p-1)s_1 = t_1$ and $s_1\le r_2$ and such that there is an isomorphism of $\ZZ_p[G]$-modules
\[ \overline{X}(A/F)\cong \ZZ_p^{r_2-s_1} \oplus (R_2,\ZZ_p,1)^{s_1}\]
where the indecomposable module $(R_2,\ZZ_p,1)$ is  an extension of  $R_2$ by $\ZZ_p$ which corresponds to the image of $1$ in $\Ext^1_{\ZZ_p[G]}(R_2,\ZZ_p)\cong \ZZ/p$.

\subsection{}$|G| > p^2$. In this case $\iota(G)$ is infinite (as proved in \cite{hr2}) but Proposition \ref{sha tate} still imposes very strong restrictions on those indecomposable modules that can occur in the decomposition of the modules $\overline{X}(A/F_i)$. For example, if $\Sel_p(A/k)$ vanishes, then the isomorphism $X(A/F_i)_{G_{i}} \cong X(A/k)$ coming from Proposition \ref{sha tate} combines with Nakayama's Lemma to imply that $\Sel_p(A/F_i)$ vanishes for all $i\ge 0$.

In the following result we describe the next simplest case.

\begin{proposition}\label{procyclic} Assume that the abelian variety $A$ and the pro-cyclic extension $F/k$ satisfy all of the above hypotheses. Assume in addition that $\Sel_p(A/k)$ is both finite and non-trivial. Then either

\begin{itemize}
\item[(i)] $\sha(A/F_i)_p$ is non-trivial for all $i \ge 0$, or
\item[(ii)] there exists a natural number $n$, a strictly positive integer $m_n$ and for each integer $i$ with $1\le i < n$ a non-negative integer $m_i$ such that the following conditions are satisfied.
\begin{itemize}
\item[(a)] $\sha(A/F_n)_p$ is trivial;
\item[(b)] There is a decreasing filtration of $X(A/F)$ with associated graded object $\bigoplus_{i=0}^{n-1}R_{n-i}^{m_{n-i}}$. In particular, for each integer $i$ with $1\le i \le n$ one has $\srk(A/F_i) = \sum_{a=1}^{i}m_a(p^a-p^{a-1})$;
\item[(c)] There is a decreasing filtration of $X(A/k)$ with associated graded object $\bigoplus_{i=0}^{n-1}(\ZZ/p)^{m_{n-i}}$. In particular, the order of $\Sel_p(A/k)$ is $\prod_{a=1}^{n}p^{m_a}$ and its exponent divides $p^e$ with $e$ equal to the number of integers $m_a$ that are strictly positive.
\end{itemize}
\end{itemize}
\end{proposition}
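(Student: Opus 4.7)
The plan is to dichotomize on whether $\sha(A/F_i)_p$ vanishes for some $i$; if not, case (i) holds. Otherwise let $n$ denote the smallest such index. Cassels-Tate duality then forces $\sha(A^t/F_n)_p = 0$, so $T(A/F_n) = 0$: consequently $M := X(A/F_n) = \overline{X}(A/F_n)$ is a $\ZZ_p$-torsion-free $\ZZ_p[G_n]$-lattice, where $G_n := G_{F_n/k} \cong \ZZ/p^n\ZZ$. The finiteness of $\Sel_p(A/k)$ combined with hypothesis~\ref{hyp_a} forces $r_0 = 0$, and hence identifies $X(A/k)$ with the finite group $\sha(A^t/k)_p$. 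Proposition~\ref{sha tate} then identifies this with the coinvariants $M_{G_n}$, so $\QQ_p \otimes M$ contains no copy of the trivial character and decomposes as $\bigoplus_{j=1}^{n} V_j^{m_j}$ for non-negative integers $m_j$, where $V_j := \QQ_p \otimes R_j$ is the irreducible $\QQ_p[G_n]$-representation attached to primitive $p^j$-th roots of unity.

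The core step is to construct the filtration as the \emph{isotypic filtration} inside $M \otimes \QQ_p$. I would set $M_{(j)} := M \cap \bigoplus_{i \le j} V_i^{m_i}$ for $0 \le j \le n$, obtaining a chain of $\ZZ_p$-saturated $\ZZ_p[G_n]$-submodules $0 = M_{(0)} \subset M_{(1)} \subset \cdots \subset M_{(n)} = M$. Each quotient $M_{(j)}/M_{(j-1)}$ injects into $V_j^{m_j}$ and is annihilated by $\Phi_{p^j}(\sigma)$, where $\sigma$ generates $G_n$, so it is a torsion-free module over $\ZZ_p[G_n]/(\Phi_{p^j}(\sigma)) \cong R_j \cong \ZZ_p[\zeta_{p^j}]$; by the saturation $M_{(j)} \otimes \QQ_p = \bigoplus_{i\le j} V_i^{m_i}$, this quotient has $\ZZ_p$-rank exactly $m_j \phi(p^j)$. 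Since $R_j$ is a DVR, torsion-freeness and this rank count force $M_{(j)}/M_{(j-1)} \cong R_j^{m_j}$. Reversing the indexing gives the desired decreasing filtration with graded object $\bigoplus_{i=0}^{n-1} R_{n-i}^{m_{n-i}}$, and the rank formula in (b) then follows at once from $\srk(A/F_i) = \dim_{\QQ_p}((\QQ_p \otimes M)^{G_{F_n/F_i}})$ together with $V_j^{G_{F_n/F_i}} = V_j$ for $j \le i$ and $0$ otherwise. The strict positivity $m_n \ge 1$ I would verify by contradiction: if $m_n = 0$, then $G_{F_n/F_{n-1}}$ acts trivially on $M$, so $M_{G_{F_n/F_{n-1}}} = M$ and Proposition~\ref{sha tate} yields $X(A/F_{n-1}) \cong M$, which is torsion-free --- but by the minimality of $n$ and Cassels-Tate duality one has $T(A/F_{n-1}) = \sha(A^t/F_{n-1})_p \neq 0$, a contradiction.

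For claim (c), I apply $(-)_{G_n}$ to each short exact sequence $0 \to M_{(j-1)} \to M_{(j)} \to R_j^{m_j} \to 0$. The long exact sequence in homology reduces to a short exact sequence provided $H_1(G_n, R_j) = 0$, and for cyclic $G_n$ the Tate-cohomology periodicity identifies this with $\hat H^0(G_n, R_j) = R_j^{G_n}/NR_j$. A direct calculation shows $R_j^{G_n} = 0$, since the element $\zeta_{p^j}-1$ is not a zero-divisor in the domain $\ZZ_p[\zeta_{p^j}]$. Consequently $X(A/k) \cong M_{G_n}$ inherits a filtration with graded pieces $(R_j^{m_j})_{G_n} = \FF_p^{m_j}$ (using $(R_j)_{G_n} = R_j/(\zeta_{p^j}-1)R_j = \FF_p$), and this immediately yields order $\prod_{a=1}^n p^{m_a}$ and exponent bounded by $p^e$, where $e$ counts the strictly positive $m_a$.

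The main obstacle I anticipate is the isotypic-filtration step, and specifically the passage from torsion-freeness of each graded piece to \emph{freeness} over $R_j$: the saturation of the $M_{(j)}$ together with the rank count are both needed to pin the $R_j$-rank of $M_{(j)}/M_{(j-1)}$ as exactly $m_j$, after which the DVR structure of $R_j$ delivers freeness and hence the precise shape $R_j^{m_j}$ of each graded piece.
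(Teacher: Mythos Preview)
Your proof is correct and essentially identical to the paper's: your isotypic filtration $M_{(j)} = M \cap \bigoplus_{i\le j} V_i^{m_i}$ coincides with the paper's fixed-point filtration $M^{G^{n-j}}$ (since $M$ is $\ZZ_p$-free and $\bigoplus_{i\le j}V_i^{m_i}=(\QQ_p\otimes M)^{G^{n-j}}$), and both arguments identify the successive quotients as free $R_j$-modules via the DVR structure of $\ZZ_p[\zeta_{p^j}]$. For part (c) you pass to coinvariants while the paper computes $\hat H^{-1}(G_n,M)$, but these agree because $M^{G_n}=0$, so the two computations are the same.
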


 \begin{proof} It is enough to assume the existence of a natural number $m$ for which $\sha(A/F_m)_p$, and hence also $T(A/F_m)$, is trivial and then use this to prove all assertions in claim (ii). We write $n$ for the least such $m$ and note that, since $\sha(A/F_n)_p$ is trivial, for each integer $i$ with $0\le i < n$ the group $\sha(A/F_i)_p$ is finite and hence equal to $T(A/F_i)$.

 We may also assume henceforth that $F = F_n$ so that $G=G_n$. We then set $\overline{X} := \overline{X}(A/F) = X(A/F)$ and for each integer $i$ with $0\le i \le n$ we write $G^i$ for the subgroup of $G$ of order $p^{i}$ and $G_i$ for the quotient of $G$ of order $p^i$ (so that $G_i \cong G/G^{n-i}$).

 Then for each integer $i$ with $0 \le i < n$ there exists a non-negative integer $m_{n-i}$ and a short exact sequence of $\ZZ_p[G]$-modules of the form
\begin{equation}\label{key ses}0 \to \overline{X}^{G^{i+1}} \to \overline{X}^{G^i} \to R_{n-i}^{m_{n-i}}\to 0.\end{equation}
In particular, since the finiteness of $\Sel_p(A/k)$ implies that $\overline{X}^{G^n}$ vanishes, the $\ZZ_p[G]$-module $\overline{X}$ has a decreasing filtration
\[ \{0\} = \overline{X}_n \subseteq \overline{X}_{n-1} \subseteq \dots \subseteq \overline{X}_0 = \overline{X}\]
with each $\overline{X}_i := \overline{X}^{G^i}$ a $\ZZ_p[G_{n-i}]$-module and each quotient $\overline{X}_i/\overline{X}_{i+1}$ isomorphic to $R_{n-i}^{m_{n-i}}$. This gives the filtration on $\overline{X}=X(A/F)$ described in claim~(ii)(b) and also implies that the  $\QQ_p[G]$-module $\QQ_p\otimes \overline{X}$ is isomorphic to $\bigoplus_{i=1}^{i=n}(\QQ_p\otimes R_i)^{m_i}$.

Now for each integer $a$ with  $1\le a \le n$ and each subgroup $H$ of $G$ the module of invariants $R_a^H$ is equal to $R_a$ if $H\subseteq G^{n-a}$ and otherwise vanishes. For each integer $i$ with  $1\le i \le n$, by using ~\eqref{rationalisom}, one therefore has
\[ \srk(A/F_i) = \dim_{\QQ_p}\Bigl((\QQ_p\otimes \overline{X})^{G^{n-i}}\Bigr) = \sum_{a=1}^{a=i}m_a\cdot \dim_{\QQ_p}(\QQ_p\otimes R_a) = \sum_{a=1}^{a=i}m_a\cdot (p^a-p^{a-1})\]
where the last equality follows from the fact that for each $a$ there is a short exact sequence of $\ZZ_p[G]$-modules
\begin{equation}\label{key ses2} 0 \to \ZZ_p[G_{a-1}] \to \ZZ_p[G_a] \to R_a\to 0\end{equation}
and hence that $\dim_{\QQ_p}(\QQ_p\otimes R_a) = |G_a| - |G_{a-1}| = p^a-p^{a-1}$. This proves claim~(ii)(b).

To prove claim (ii)(c) the key point is that Proposition \ref{sha tate} combines with the vanishing of $T(A/F)$ and the finiteness of $\Sel_p(A/k)$ to give an isomorphism of finite abelian groups
\begin{equation}\label{key iso} \hat H^{-1}(G,\overline{X})\cong T^{F}(A/k) = T(A/k) = X(A/k) \end{equation}
whilst the exact sequences~\eqref{key ses} and~\eqref{key ses2} provide a means of explicitly computing $\hat H^{-1}(G,\overline{X})$. More precisely, since the module of invariants $\overline{X}^G$ vanishes, as does the module $R^G_{n-a}$ for each integer $a$ with $0 \le a < n$, and since Tate cohomology with respect to $G$ is periodic of order two, the exact sequences~\eqref{key ses} give rise to associated short exact sequences
\begin{equation*}\label{key filt} 0 \to \hat H^{-1}(G,\overline{X}^{G^{a+1}}) \to \hat H^{-1}(G,\overline{X}^{G^a}) \to \hat H^{-1}(G,R_{n-a})^{m_{n-a}}\to 0\end{equation*}
whilst the exact sequence~\eqref{key ses2} gives rise to an isomorphism
\begin{multline*} \hat H^{-1}(G,R_a) = \hat H^{-1}(G_a,R_a)\cong \hat H^0\bigl(G_a,\ZZ_p[G_{a-1}]\bigr)\\ \cong \hat H^0\bigl(G^{n-a+1}/G^{n-a},\ZZ_p\bigr) \cong \ZZ/p. \end{multline*}
By combining the above exact sequences with the isomorphism~\eqref{key iso}, we thereby obtain a decreasing filtration of $X(A/k)$ of the sort described in claim~(ii)(c).

Given this filtration, it is then easy to deduce the assertions in claim~(ii)(c) concerning the order and exponent of $\Sel_p(A/k)$.

At this stage it only remains to prove that the integer $m_n$ is strictly positive. However, if this is not the case, then the sequence~\eqref{key ses} with $i = 0$ implies that $G^{1}$ acts trivially on the module $\overline{X}$ and hence that the group $\hat H^{-1}(G^{1},\overline{X})$ vanishes. Moreover, Lemma \ref{sha tate2} implies that the data $A$ and $F/F_{n-1}$ satisfy the hypotheses~\ref{hyp_a}--\ref{hyp_e} and hence, by applying Proposition \ref{sha tate} to this data, we deduce that the group $T^{F}(A/F_{n-1})$ also vanishes. In particular, since $T(A/F)$ vanishes the group $T(A/F_{n-1}) = \sha(A/F_{n-1})_p$ must also vanish and this contradicts the minimality of $n$. \end{proof}

\begin{example}
 We give some simple examples showing that in certain situations the result of Proposition \ref{procyclic} becomes very explicit. To do this we assume
the notation and hypotheses of Proposition \ref{procyclic}. We also assume that $\Sel_p(A/k)$ has order $p^2$ and that $T(A/F_m)$ vanishes for some integer $m$. In
this case Proposition \ref{procyclic}(ii)(c) implies that the least such $m$ is greater than or equal to two and we assume, for simplicity, that this integer is equal to
two. We then also assume that $F=F_2$ and hence that $G = G_2$.

\noindent{}(i) If $\Sel_p(A/k)$ is cyclic of order $p^2$, then Proposition \ref{procyclic}(ii)(c) implies that $m_1 = m_2 = 1$ and hence that
$X(A/F_2)$ is an extension of $R_2$ by $R_1$. Moreover, the Heller-Reiner classification of $\ZZ_p[G]$-lattices shows (via Table 2 in \cite{RVM}) that, up to
isomorphism, there is a unique such extension $M$ for which $\hat H^{-1}(G,M)$ is isomorphic to $\ZZ/p^2$ (rather than having exponent dividing $p$). Putting everything
together one finds that in this case $X(A/F_2)$ is isomorphic as a $\ZZ_p[G]$-module to the indecomposable module $(R_2,R_1,1)$ which corresponds to the image of
$1$ in $\Ext^1_{\ZZ_p[G]}(R_2,R_1)\cong \ZZ/p$. Using the isomorphism $X(A/F_1) \cong X(A/F_2)_{G^1} \cong (R_2,R_1,1)_{G^1}$ one then computes that the
$\ZZ_p[G]$-modules $T(A/F_1)$ and $\overline{X}(A/F_1)$ are isomorphic to $\ZZ/p$ and $R_1$ respectively.

\noindent{}(ii) If $\Sel_p(A/k)$ has exponent $p$ and order $p^2$, then Proposition \ref{procyclic}(ii)(c) implies that either $m_2 = 2$ and $m_1 = 0$ or that
$m_2=m_1 = 1$. In the former case $X(A/F_2)$ is isomorphic to $R_2^2$ and hence $X(A/F_1)\cong X(A/F_2)_{G^1}$ is isomorphic to
$(R_2^2)_{G^1} \cong (R_{2,G^1})^2 \cong (\ZZ/p)^{2p}$. In the latter case the Heller-Reiner classification implies that $X(A/F_2)$ is isomorphic to either
$R_1\oplus R_2$ or, for some integer $i$ with $0 < i< p$, to the indecomposable $\ZZ_p[G]$-module $(R_2,R_1,i)$ which corresponds to the image of the polynomial
$(1-x)^i$ in the group $\Ext_{\ZZ_p[G]}^1(R_2,R_1) \cong R_1/p \cong (\ZZ/p)[x]/(1-x)^{p-1}$.

If $X(A/F_2)\cong R_1\oplus R_2$, then one computes that $X(A/F_1)\cong X(A/F_2)_{G^1}$ is isomorphic as a $\ZZ_p[G]$-module to $(\ZZ/p)^p\oplus R_1$.
In a similar way, if $X(A/F_2)\cong (R_2,R_1,i)$ with $0< i< p$, then one computes that the abelian group
$T(A/F_1) = T^F(A/F_1)\cong \hat H^{-1}\bigl(G^1,X(A/F_2)\bigr)$ is isomorphic to $(\ZZ/p)^{i+1}$ and that the $\ZZ_p[G]$-module $\overline{X}(A/F_1)$ is
isomorphic to  $R_1$.
\end{example}

\subsection*{Acknowledgements} It is a pleasure for us to thank Werner Bley and Griff Elder for helpful discussions and correspondence concerning this work,
and the anonymous referees for making several useful suggestions and for correcting an error in one of our arguments.


\bibliographystyle{amsplain}
\bibliography{sel}

\providecommand{\bysame}{\leavevmode\hbox to3em{\hrulefill}\thinspace}
\providecommand{\MR}{\relax\ifhmode\unskip\space\fi MR }
\providecommand{\MRhref}[2]{%
  \href{http://www.ams.org/mathscinet-getitem?mr=#1}{#2}
}
\providecommand{\href}[2]{#2}
\begin{thebibliography}{10}

\bibitem{brown}
K.~S. Brown, \emph{Cohomology of groups}, Graduate Texts in Mathematics,
  vol.~87, Springer-Verlag, New York, 1982.

\bibitem{omac}
D.~Burns and D.~Macias~Castillo, \emph{Organising matrices for arithmetic
  complexes}, to appear in Int. Math. Res. Not., 2013.

\bibitem{mw}
D.~Burns, D.~Macias~Castillo, and C.~Wuthrich, \emph{On {Mordell}-{Weil} groups
  and congruences between derivatives of twisted {Hasse}-{Weil}
  {$L$}-functions}, to appear in J. reine u. angew. Math., 2015.

\bibitem{curtisr}
C.~W. Curtis and I.~Reiner, \emph{Methods of representation theory. {V}ol.
  {I}}, Wiley Classics Library, John Wiley \& Sons Inc., New York, 1990, With
  applications to finite groups and orders, Reprint of the 1981 original, A
  Wiley-Interscience Publication.

\bibitem{GE}
G.~G. Elder, \emph{Regarding a question of {David} {Burns}}, preprint, 2011.

\bibitem{RG}
R.~Greenberg, \emph{Galois theory for the {S}elmer group of an abelian
  variety}, Compositio Math. \textbf{136} (2003), no.~3, 255--297.

\bibitem{RG2}
\bysame, \emph{Iwasawa theory, projective modules, and modular
  representations}, Mem. Amer. Math. Soc. \textbf{211} (2011), no.~992, vi+185.

\bibitem{hr2}
A.~Heller and I.~Reiner, \emph{Representations of cyclic groups in rings of
  integers. {I}}, Ann. of Math. (2) \textbf{76} (1962), 73--92.

\bibitem{hr}
\bysame, \emph{Representations of cyclic groups in rings of integers. {II}},
  Ann. of Math. (2) \textbf{77} (1963), 318--328.

\bibitem{m}
B.~Mazur, \emph{Rational points of abelian varieties with values in towers of
  number fields}, Invent. Math. \textbf{18} (1972), 183--266.

\bibitem{mr}
B.~Mazur and K.~Rubin, \emph{Organizing the arithmetic of elliptic curves},
  Adv. Math. \textbf{198} (2005), no.~2, 504--546.

\bibitem{mr2}
\bysame, \emph{Finding large {S}elmer rank via an arithmetic theory of local
  constants}, Ann. of Math. (2) \textbf{166} (2007), no.~2, 579--612.

\bibitem{no}
E.~Noether, \emph{Normalbasis bei {K\"orpern} ohne {h\"ohere} {Verzweigung}},
  J. Reine Angew. Math. \textbf{167} (1932), 147--152.

\bibitem{RVM}
M.~Rzedowski-Calder{\'o}n, G.~D. Villa~Salvador, and M.~L. Madan, \emph{Galois
  module structure of rings of integers}, Math. Z. \textbf{204} (1990), no.~3,
  401--424.

\bibitem{serre}
J-P. Serre, \emph{Propri\'et\'es galoisiennes des points d'ordre fini des
  courbes elliptiques}, Invent. Math. \textbf{15} (1972), no.~4, 259--331.

\bibitem{yakovlev}
A.~V. Yakovlev, \emph{Homological definability of {$p$}-adic representations of
  groups with cyclic {S}ylow {$p$}-subgroup}, An. \c Stiin\c t. Univ. Ovidius
  Constan\c ta Ser. Mat. \textbf{4} (1996), no.~2, 206--221, Representation
  theory of groups, algebras, and orders (Constan{\c{t}}a, 1995).

\end{thebibliography}

\end{document}